\newtheorem*{thm*}{Theorem} 
\newtheorem{thm}{Theorem}
\newtheorem{prop}{Proposition} 
\newtheorem{lem}{Lemma}
\newtheorem{exa}{Example} 
\newtheorem{rem}{Remark}
\newtheorem{cor}{Corollary}
\newtheorem{clm}{Claim}
\crefname{thm}{Thm.}{}
\crefname{prop}{Prop.}{}
\crefname{lem}{Lem.}{}
\crefname{cor}{Cor.}{}
\newcommand\Z{\mathbb Z}
\newcommand\Q{\mathbb Q}
\newcommand\R{\mathbb R}
\def\mod{\mbox{ mod }}
\def\deg{\mbox{deg }}
\def\sym{\mbox{Sym}}
\def\<{\langle}
\title[On the discriminant of a certain quadrinomials]{Bezoutians and the discriminant of a certain quadrinomials}
\author{Shuichi Otake}
\address{Department of Applied Mathematics\\ Waseda University \\ Japan }
\email{shuichi.otake.8655@gmail.com}
\author{Tony Shaska}
\address{Department of Mathematics and Statistics \\ Oakland  University \\ Rochester, MI, 48309. }
\email{shaska@oakland.edu}
\date{}                                           
\def\Gal{\mbox{Gal }}
\begin{document}

\begin{abstract}
We give an explicit  formula for the discriminant $\Delta_f (x)$ of the  quadrinomials of the form 
\[f (x)=x^n+ t (x^2+ax+b)\] 
The proof uses  Bezoutians of polynomials.  This paper is an extended version of \cite{o-sh-3}.
\end{abstract}

\maketitle

\section{Introduction}\label{intro}

In \cite{Se} Selmer studied polynomials $f(x)=x^n\pm x \pm 1$ and proved that $x^n-x-1$ is irreducible for all $\geq 2$, while $x^n+x+1$ is irreducible for $n \not\equiv 2 \mod 3$.   In the process he proved that the discriminant of $f(x)$ is (up to a multiplication by $-1$) 
$ \Delta = \pm \left( n^n \pm (n-1)^{n-1} \right),$
see \cite{Se} for precise formulas. He also noticed that polynomials $f(x)$ have very small discriminants. He checked his results for $n \leq 20$.   Other mathematicians have considered trinomials $f(x) = x^n + ax+b$ for various reasons; see \cite{Mori}, \cite{Za}. 
In general, finding conditions on the coefficients of a polynomial to have minimal discriminant is a difficult problem.  It is related to reduction of binary forms discussed in \cite{reduction} and their heights \cite{height}. More generally it is a special case of  finding conditions on the coefficients of a binary form such that the corresponding  point in the weighted moduli space of invariants is normalized; see \cite{w-height}. 

This paper is the first of hopefully others to come to determine for what $a, b, t$ the quadrinomial 
\begin{equation}\label{eq-0}
f(x) = x^n + t  (x^2+ax +b) 
\end{equation} 
has minimal discriminant, is reduced in the sense of \cite{reduction}, or  has minimal naive height.   There have been plenty of efforts to determine such formulas for certain classes of polynomials. In \cite{g-d}, \cite{swa}  the authors focus on  the computation of the discriminant of a trinomial which has been carried out in different ways. In this paper, we determine explicitly a formula for  the discriminant of $f(x)$ by using the approach of Bezoutians. 

We prove (see \cref{thm6.0}) that the discriminant $\Delta_f(x)$ of the polynomials in Eq.~\eqref{eq-0} is  given by the formula
\begin{equation}
\Delta =(-1)^{m_1}t^{n-1}   \left( (n-2)^{n-2}(a^2-4b)t^2+\gamma_{\bm c} \, t-n^nb^{n-1} \right), 
\end{equation}
such that  
\[
\gamma_{\bm c} =  \displaystyle \sum_{k=0}^{m_0}(-1)^{n+k}n^k(n-1)^{n-2k-4}(n-2)^ka^{n-2k-4} b^k \cdot S_k,  
%
\]
and $m_0=\lfloor (n-3)/2 \rfloor$, $m_1=\lceil (n-3)/2 \rceil$  for $a \neq 0$  or  $a=0$ and $n$ even,  and $\gamma_{\bm c} =0$ for $a=0$ and $n$ odd, and
\begin{small}
\begin{align*}
S_k=&(n-1)^3\binom{n-k-3}{k}a^4-\frac{n(n-1)\left\{ 5n^2-(6k+23)n+10k+24 \right\}}{n-k-3}\binom{n-k-3}{k}a^2b +4n^2(n-2)\binom{n-k-4}{k}b^2. 
\end{align*}
\end{small}

It is object of further investigation if such result could be generalized to polynomials $f(x)= x^n+ t \cdot g(x)$, where $g(x)$ is a general cubic.  As a quick application of \cref{thm6.0} we get that   for $b\neq 0$ and $\frac {(n-1)^2 a^2 } {4n (n-2)} \leq b$ the polynomial $f (t, x)$ has no real roots, for any real number $t >0$;  \cref{thm6.1}.   While there is an elementary proof of \cref{thm6.1}, it is interesting to check whether out approach would work for any polynomial of type $f(t, x) = x^n + t \cdot g(x)$, for $\deg g \geq 3$. 

It is a quite open problem to determine for what integer values of $a$, $b$, and  $t$ the quadrinomial $f(x)$ is irreducible or the discriminant $\Delta_f$ has a minimal value.  Moreover, it is our intention to study in the future for what conditions on $a, b, t$ the quadrinomial is reduced in the sense of \cite{reduction}.

Another motivation of looking at the  discriminant of such family of polynomials comes from our efforts to construct superelliptic Jacobians with large endomorphism rings.  We have checked computationally that for small $n$ and  $f(t, x) \in \Q[t, x]$, for almost all $t$ the Galois group $\Gal_{\Q} (f, x)$ is isomorphic to $S_n$.  Due to results of Zarhin this implies that the superelliptic curve $y^m = f(t, x)$ has large endomorphism ring; see \cite{f-sh} for related matters. Hence, curves $y^n=f(x)$, where $f(x)$   is as above, are smooth curves whose  Jacobians are expected to have large endomorphism rings. This remains the focus of further investigation.

\bigskip

\noindent \textbf{Notation:} Throughout this paper $n$ is a positive integer,  $\Delta$   denotes the discriminant of a polynomial $f(x)$ with respect to the variable $x$. 
The symbol   $\lfloor \cdot \rfloor$ is the floor function and $\lceil \cdot \rceil$ is the ceiling function.  $B_n (f, g)$ denotes the level $n$ Bezoutian of two polynomials $f$ and $g$. For a polynomial $f \in \R[x]$ the symbol $N_f$ denotes the number of real roots of $f(x)$.  We denote by $\binom{n}{m}$ $(n,m \in \Z_{\geq 0})$ the binomial coefficient. 

\section{Preliminaries}
Let $F$ be a field of characteristic zero and $f_1(x)$, $f_2(x)$ be polynomials over $F$. Then, for any integer $n$ such that  $n \geq \max \{ \mathrm{deg} f_1, \mathrm{deg} f_2 \}$, we put
\begin{align*}
B_{n}(f_{1},f_{2}) :&= \frac{f_{1}(x)f_{2}(y)-f_{1}(y)f_{2}(x)}{x-y}= \sum_{i,j=1}^{n} \alpha _{ij}x^{n-i}y^{n-j}\in F[x,y], \\ 
M_{n}(f_{1},f_{2}) :&= (\alpha_{ij})_{1 \leq i,j \leq n}.
\end{align*} 
The $n \times n$ matrix $M_{n}(f_{1},f_{2})$ is called the Bezoutian of $f_1$ and $f_2$. When $f_2=f_1^{\prime}$, the formal derivative of $f_1$ with respect to the indeterminate $x$, we often write $B_n(f_1):=B_{n}(f_{1},f_{1}^{\prime})$, $M_n(f_1):=M_n(f_1, f_1^{\prime})$ and the matrix $M_n(f_1)$ is called the Bezoutian of $f_1$. We denote by $N_{f_1}$ the number of distinct real roots of $f_1(x)$ and $\Delta(f_1)$ the discriminant of $f_1(x)$. Moreover, for any real  symmetric matrix $M$, we denote by $\sigma(M)$ the index of inertia of $M$. The following are the list of important properties of Bezoutians.

\begin{lem}\label{prop2.1}
Notations as above, we have
\begin{enumerate}
\item[$(i)$] $M_n(f_1, f_2)$ is an $n \times n$ symmetric matrix over $F$. $(M_n(f_1, f_2) \in \sym_n(F))$.
\item[$(ii)$] $B_{n}(f_{1},f_{2})$ $(M_{n}(f_{1},f_{2}))$ is linear in $f_{1}$ and $f_{2}$, separately. 
\item[$(iii)$] $B_{n}(f_{1},f_{2})=-B_{n}(f_{2},f_{1})$ $(M_{n}(f_{1},f_{2})=-M_{n}(f_{2},f_{1}))$.
\item[$(iv)$] $N_{f_1}=\sigma(M_n(f_1))$.
\item[$(v)$] $\displaystyle \Delta(f_1)=\frac{1}{led(f_1)^2}\det M_n(f_1)$, where $led(f_1)$ is the leading coefficient of $f_1$. In particular, if $f_1$ is monic, we have $\displaystyle \Delta(f_1)=\det M_n(f_1)$.
\item[$(vi)$] Let $\lambda ,\mu ,\nu$ be integers such that $\lambda \geq \mu > \nu \geq 0$. Then
$M_{\lambda }(x^{\mu }, x^{\nu }) = (m_{ij} )_{1\leq i,j\leq \lambda }$,
where
\begin{equation*}
m_{ij}=\begin{cases}
               1 & \text{$i+j=2\lambda-(\mu+\nu)+1$ \ $(\lambda-\mu+1\leq i,j\leq \lambda-\nu )$}, \\
               0 & \text{otherwise}.
             \end{cases}
\end{equation*}
\end{enumerate}
\end{lem}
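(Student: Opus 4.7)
Items (i)--(iii) are immediate from the defining formula: the numerator $f_1(x)f_2(y)-f_1(y)f_2(x)$ is bilinear in $(f_1,f_2)$ and antisymmetric under $f_1\leftrightarrow f_2$, yielding (ii) and (iii); under $x\leftrightarrow y$ both numerator and denominator negate, so the quotient is symmetric in $(x,y)$, yielding (i). For (vi), the direct expansion
\[
B_\lambda(x^\mu,x^\nu)=\frac{x^\mu y^\nu-x^\nu y^\mu}{x-y}=\sum_{k=0}^{\mu-\nu-1}x^{\mu-1-k}\,y^{\nu+k}
\]
matches the coefficient of $x^{\lambda-i}y^{\lambda-j}$ to the anti-diagonal band $i+j=2\lambda-(\mu+\nu)+1$ with $\lambda-\mu+1\le i,j\le \lambda-\nu$, as stated.

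The heart of the proof is the \emph{Bezoutian--Lagrange identity}: if $f_1$ has distinct roots $\alpha_1,\dots,\alpha_n$ (over an algebraic closure) and we set $p_k(x):=f_1(x)/(x-\alpha_k)$, then
\[
B_n(f_1,f_2)(x,y)=\sum_{k=1}^n\frac{f_2(\alpha_k)}{f_1'(\alpha_k)}\,p_k(x)\,p_k(y).
\]
This is verified by specializing $x=\alpha_l$: both sides collapse to $f_2(\alpha_l)p_l(y)$, using $f_1(\alpha_l)=0$ and $p_l(\alpha_l)=f_1'(\alpha_l)$, and two polynomials of degree $<n$ in $x$ agreeing at $n$ points coincide. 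Taking $f_2=f_1'$ yields $M_n(f_1)=P^{\!\top}P$, where $P$ is the $n\times n$ matrix whose $k$-th row lists the coefficients of $p_k$ in the basis $x^{n-1},x^{n-2},\dots,1$.

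For (v), evaluating the identity at the roots gives $PW^{\!\top}=\mathrm{diag}(f_1'(\alpha_1),\dots,f_1'(\alpha_n))$, where $W_{lj}=\alpha_l^{n-j}$ is the column-reversed Vandermonde; the classical formulas $\prod_k f_1'(\alpha_k)=(-1)^{n(n-1)/2}\Delta(f_1)$ and $\det W=(-1)^{n(n-1)/2}\prod_{i<j}(\alpha_j-\alpha_i)$ combine to give $\det P=\prod_{i<j}(\alpha_j-\alpha_i)$ and hence $\det M_n(f_1)=(\det P)^2=\Delta(f_1)$ in the monic case; scaling recovers the factor $\mathrm{led}(f_1)^{-2}$ in general. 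For (iv), I would group the non-real roots into conjugate pairs $\alpha,\bar\alpha$: a real root contributes a rank-one positive-definite summand $p_k(x)p_k(y)$ to $M_n(f_1)$, whereas a pair contributes $p_\alpha(x)p_\alpha(y)+\overline{p_\alpha(x)p_\alpha(y)}=2\bigl(u(x)u(y)-v(x)v(y)\bigr)$ with $p_\alpha=u+iv$, a rank-two form of signature $(1,1)$; summing the contributions gives $\sigma(M_n(f_1))=N_{f_1}$.

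The main technical obstacle throughout (iv)--(v) is the repeated-root case, where $f_1'(\alpha_k)=0$ and the Bezoutian--Lagrange identity degenerates. For (v) this is harmless because both $\det M_n(f_1)$ and $\Delta(f_1)$ vanish simultaneously by continuity; for (iv) I would perturb $f_1$ in a one-parameter family of squarefree polynomials and invoke lower semicontinuity of rank to deduce the signature count in the limit, or alternatively reduce to the squarefree part $f_1/\gcd(f_1,f_1')$ before applying the distinct-root argument.
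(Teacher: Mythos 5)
Your proposal is correct in substance, but it takes a genuinely different route from the paper for the simple reason that the paper does not prove this lemma at all: its ``proof'' consists of citations (Fuhrmann's book for (i)--(iv), a classical source for (v), and the authors' earlier paper for (vi)). What you supply is essentially the standard self-contained argument those references contain, organized around the Lagrange-type expansion $B_n(f_1,f_2)=\sum_k \frac{f_2(\alpha_k)}{f_1'(\alpha_k)}\,p_k(x)p_k(y)$ with $p_k=f_1/(x-\alpha_k)$, verified by evaluation at the $n$ roots; this packages (iv) and (v) into the single congruence $M_n(f_1)=P^{\top}P$ and lets Hermite's theorem and the discriminant formula both drop out of the Vandermonde computation. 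That buys a uniform, checkable proof where the paper offers only pointers, at the cost of having to treat the degenerate cases yourself. Two spots deserve tightening. First, in (iv) a real root contributes a rank-one positive \emph{semi}definite form (not positive definite), and to add the signatures of the blocks you must note that the $p_k$ are linearly independent (equivalently $\det P\neq 0$, which your own computation $\det P=\prod_{i<j}(\alpha_j-\alpha_i)$ provides), so that Sylvester's law of inertia applies to the decomposition. Second, for repeated roots the perturbation argument for (iv) is genuinely delicate --- the signature of a family of real symmetric matrices is not continuous, and $N_{f_1}$ itself can jump under perturbation, since a double real root may split into a conjugate pair --- so your fallback of reducing to the squarefree part $f_1/\gcd(f_1,f_1')$ is the one to rely on; it is also the right reduction because the paper defines $N_{f_1}$ as the number of \emph{distinct} real roots. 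The Zariski-density/continuity argument for (v) in the non-squarefree case is fine, since the asserted identity is polynomial in the coefficients of $f_1$, and items (i)--(iii) and (vi) are immediate exactly as you say.
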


\begin{proof}
For (i)--(iv), see \cite[Theorem 8.25, 9.2]{fuh}. For (v), see equation (4) of \cite[p.217]{fro}. For (vi), see \cite[Lemma 2]{o-sh}
\end{proof}
For any vector ${\bm r}=(r_0, \cdots, r_s) \in \R^{s+1}$, let us put 
\begin{align*}
g_{\bm r}(x)=g(r_0,\cdots, r_s ; x) = \sum_{k=0}^{s}r_{s-k}x^{s-k} \in \R[x], \\ 
f_{\bm r}(t; x)=f^{(n)}(r_0,\cdots, r_s, t ; x)=x^n+t \cdot g_{\bm r}(x) \in \R(t)[x].
\end{align*}

In the previous paper \cite{o-sh}, by using the above properties of Bezoutians, we obtained the next theorem

\begin{thm}{\cite[Thm.~2]{o-sh}} \label{thm5.2}
Let ${\bm r}=(r_0, \cdots, r_s) \in \R^{s+1}$ be a vector such that $g_{\bm r}(x)$ is a degree $s$ separable polynomial satisfying $N_{g_{\bm r}(x)}=\gamma$ $(0 \leq \gamma \leq s)$. Let us consider $f_{\bm r}(t ; x)=f^{(n)}(r_0, \cdots, r_s, t ; x)$ as a polynomial over $\R(t)$ in $x$ and put 
\[ 
P_{\bm r}(t)=\det M_n(f_{\bm r}(t ; x))=\det M_n(f_{\bm r}(t ; x), f_{\bm r}^{\prime}(t ; x)),
\] 
where $f_{\bm r}^{\prime}(t ; x)$ is a derivative of $f_{\bm r}(t ; x)$ with respect to $x$. Then, for any real number $t>\alpha_{\bm r}=\max\{ \alpha \in \R \mid P_{\bm r}(\alpha)=0 \}$, we have 
\begin{align*}
N_{f_{\bm r}(t ; x)}
=
\begin{cases}
\gamma+1 & \text{$n-s$ $:$ odd} \\
\gamma & \text{$n-s$ $:$ even, $r_s>0$} \\
\gamma+2 & \text{$n-s$ $:$ even, $r_s<0$}.
\end{cases}
\end{align*}
\end{thm}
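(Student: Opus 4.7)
The plan is to combine the signature stability of the Bezoutian with an asymptotic analysis of the real roots of $f_{\bm r}(t;x)$ as $t\to+\infty$.

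First I would invoke \cref{prop2.1}(iv)--(v): since $f_{\bm r}(t;x)$ is monic in $x$, one has $N_{f_{\bm r}(t;x)} = \sigma(M_n(f_{\bm r}(t;x)))$ and $P_{\bm r}(t) = \det M_n(f_{\bm r}(t;x))$. Eigenvalues of a real symmetric matrix depend continuously on its entries, so on any interval where $P_{\bm r}(t)\neq 0$ no eigenvalue of $M_n(f_{\bm r}(t;x))$ can cross zero, and the signature $\sigma(M_n(f_{\bm r}(t;x)))$ is constant there. By the definition of $\alpha_{\bm r}$ the half-line $(\alpha_{\bm r},\infty)$ is a single connected component of $\{t\in\R : P_{\bm r}(t)\neq 0\}$, so it suffices to evaluate $N_{f_{\bm r}(t;x)}$ at one value of $t$ in this component; the most economical choice is to let $t\to+\infty$.

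Next I would count the real roots of $f_{\bm r}(t;x) = x^n + t\,g_{\bm r}(x)$ for $t$ large by splitting them into two asymptotic families. Near each of the $\gamma$ real roots $\rho_i$ of $g_{\bm r}$, separability gives $g'_{\bm r}(\rho_i)\neq 0$, and the implicit function theorem applied to $g_{\bm r}(x) = -x^n/t$ produces a unique simple real root $\xi_i(t)\to\rho_i$ as $t\to\infty$; this supplies $\gamma$ ``bounded'' real roots. The remaining ``unbounded'' roots arise from the dominant balance $x^{n-s}\sim -t\,r_s$: under the rescaling $x = (t|r_s|)^{1/(n-s)} y$, the polynomial $t^{-n/(n-s)}f_{\bm r}(t;x)$ converges on $\{y\neq 0\}$ to $y^s\bigl(y^{n-s}+\mathrm{sgn}(r_s)\bigr)$, whose nonzero real roots number $1$ when $n-s$ is odd, $2$ when $n-s$ is even and $r_s<0$, and $0$ when $n-s$ is even and $r_s>0$. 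This exactly matches the three cases of the theorem.

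The main obstacle I expect is exhaustiveness: showing that no further real roots exist for $t$ large outside the two families above. I would handle this by a partition-of-$\R$ argument, noting that on any compact interval disjoint from $\{\rho_1,\ldots,\rho_\gamma\}$ the polynomial $g_{\bm r}$ has constant nonzero sign, and hence so does $f_{\bm r}(t;x)$ for $t$ sufficiently large, while outside a large bounded window the rescaled analysis has already enumerated every real root; a monotonicity check via $f'_{\bm r}(t;x) = nx^{n-1} + t\,g'_{\bm r}(x)$ on each sub-interval pins the count to exactly one simple real root per candidate region. A cleaner algebraic alternative, exploiting \cref{prop2.1}(ii) and (vi), is to perform a block reduction of the scaled matrix $t^{-2}M_n(f_{\bm r}(t;x))$ so that its limit decomposes into a lower block realizing the Bezoutian of $g_{\bm r}$ (signature $\gamma$) and an upper block associated with $y^{n-s}+\mathrm{sgn}(r_s)$ (signature $0$, $1$, or $2$ according to the three cases), from which the total signature can be read off directly.
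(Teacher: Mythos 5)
Your proposal is correct in substance, but note first that the present paper does not prove this statement at all: it is imported verbatim from \cite{o-sh}, and the method of that companion paper (visible in how Sections 2--4 here manipulate $A_{\bm c}(t)_k$ by congruence transformations) is precisely your ``cleaner algebraic alternative'': reduce $M_n(f_{\bm r}(t;x))$ by explicit elementary congruences to a block form whose diagonal blocks have signs readable from their leading powers of $t$, then invoke \cref{prop2.1}(iv) and Sylvester's law of inertia. Your primary route is genuinely different: you use the Bezoutian only for the soft step (constancy of $\sigma(M_n(f_{\bm r}(t;x)))$, hence of $N_{f_{\bm r}(t;x)}$, on the component $(\alpha_{\bm r},\infty)$ where $P_{\bm r}\neq 0$), and then count real roots directly as $t\to+\infty$ by splitting them into the $\gamma$ bounded roots converging to the real zeros of $g_{\bm r}$ and the $n-s$ escaping roots which, after the rescaling $x=(t|r_s|)^{1/(n-s)}y$, converge to the simple nonzero roots of $y^s(y^{n-s}+\mathrm{sgn}(r_s))$; the case count $1/0/2$ for real $(n-s)$-th roots of $-\mathrm{sgn}(r_s)$ reproduces the three cases. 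This analytic argument is more elementary and generalizes at once to arbitrary $g$, whereas the congruence reduction yields the explicit intermediate matrices that the rest of the paper reuses to compute $\Delta(f_{\bm c}(t;x))$. Two small points to tighten: your exhaustiveness argument should rule out the intermediate range $C\le |x|\le \epsilon(t|r_s|)^{1/(n-s)}$ (immediate, since a root there would force $|x|^{n-s}\ge t|r_s|/2$), and near each simple \emph{real} root of the limit polynomials you should invoke Rouch\'e together with invariance under complex conjugation to conclude that the unique nearby root of $f_{\bm r}(t;x)$ is itself real; the proposed ``monotonicity check via $f_{\bm r}'$'' works for the bounded roots but is not quite enough by itself for the escaping ones.
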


Then, by using \cref{thm5.2} and  \cref{thm6.0}, we construct a certain family of totally complex polynomials of the form $f_{(b,a,1)}(t; x)$; see \cref{thm6.1}. Here, note that a real polynomial $f(x)$ to be called totally complex if it has no real roots, that is, $N_f=0$. 


Let $n \geq 3$ be an integer and $a$, $b$ be real numbers.   Let us put ${\bm c}=(b,a,1) \in \R^3$ and
\begin{equation*}
g_{\bm c}(x)=x^2+ax+b \in \R[x], \ f_{\bm c}(t; x)=x^n+tg_{\bm c}(x) \in \R(t)[x].
\end{equation*}
We denote by $\binom{n}{m}$ $(n,m \in \Z_{\geq 0})$ the binomial coefficient. Note that $\binom{n}{0}=1$ for any integer $n \geq 0$ and $\binom{n}{m}=0$ if $n<m$. Moreover, we define $\binom{n}{m}=0$ for any $n \in \Z_{\geq 0}$ and $m \in \Z_{<0}$.

Our main result is the following. 

\begin{thm} \label{thm6.0}
Put $m_0=\lfloor (n-3)/2 \rfloor$, $m_1=\lceil (n-3)/2 \rceil$. Moreover, for any integers $n\geq 4$ and $k$ $\left(0 \leq k \leq m_0\right)$, put 
\begin{small}
\begin{align*}
S_k=&(n-1)^3\binom{n-k-3}{k}a^4-\frac{n(n-1)\left\{ 5n^2-(6k+23)n+10k+24 \right\}}{n-k-3}\binom{n-k-3}{k}a^2b +4n^2(n-2)\binom{n-k-4}{k}b^2. 
\end{align*}
\end{small}
Here, $\lfloor \cdot \rfloor$ is the floor function and $\lceil \cdot \rceil$ is the ceiling function. \\[1mm]
$(1)$ \ Suppose $n=3$. Then, we have
\begin{align*}
\Delta\left(f_{\bm c}(t; x)\right)=t^2\Biggl\{(a^2-4b)t^2-(4a^3-18ab)t-27b^2 \Biggr\}.
\end{align*}
$(2)$ \ Suppose $n\geq 4$. Then, we have
\begin{align*}
\Delta\left(f_{\bm c}(t; x)\right)=(-1)^{m_1}t^{n-1}\Biggl\{(n-2)^{n-2}(a^2-4b)t^2+\gamma_{\bm c}t-n^nb^{n-1} \Biggr\}, 
\end{align*}
where
\begin{align*}
\gamma_{\bm c}=
\begin{cases}
\displaystyle \sum_{k=0}^{m_0}(-1)^{n+k}n^k(n-1)^{n-2k-4}(n-2)^ka^{n-2k-4}b^kS_k & \hspace{-1mm} \text{$(a \neq 0$ or $a=0$, $n:$ even$)$}, \vspace{2mm} \\
0 & \hspace{-1mm} \text{$(a=0$, $n:$ odd$)$}.
\end{cases}
\end{align*}
\end{thm}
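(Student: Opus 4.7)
The plan is to compute $\Delta(f_{\bm c}) = \det M_n(f_{\bm c})$ via Lemma \ref{prop2.1}(v) by first making the Bezoutian matrix explicit and then evaluating its determinant. By the bilinearity of Lemma \ref{prop2.1}(ii),
\[
M_n(f_{\bm c}, f'_{\bm c}) = n M_n(x^n, x^{n-1}) + t\bigl[2M_n(x^n, x) + aM_n(x^n, 1) - nM_n(x^{n-1}, x^2) - naM_n(x^{n-1}, x) - nbM_n(x^{n-1}, 1)\bigr] + t^2 M_n(g_{\bm c}, g'_{\bm c}).
\]
Each of the six monomial Bezoutians is a $0/1$-matrix supported on a specific antidiagonal by Lemma \ref{prop2.1}(vi), and a direct computation yields $B_n(g_{\bm c}, g'_{\bm c}) = 2xy + a(x+y) + (a^2 - 2b)$, which populates only the four lower-right positions $(n-1, n-1), (n-1, n), (n, n-1), (n, n)$. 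Combining all contributions, $M_n(f_{\bm c})$ is nonzero only at the entry $(1, 1) = n$, on the three antidiagonals $i + j \in \{n, n+1, n+2\}$ (with endpoint values $2t, at$ and interior values $-(n-2)t, -(n-1)at, -nbt$), and at the four corner positions with values $2t^2, at^2, at^2, (a^2-2b)t^2$.

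A $t$-degree count then constrains the determinant. Since the only $t$-free entry is $M[1,1]$, every nonzero Leibniz summand has $t$-degree at least $n-1$; since only the four corner entries carry $t^2$ and at most two can appear in any summand (one per row of $\{n-1, n\}$), the degree is at most $n+1$. Hence $\det M_n(f_{\bm c}) = t^{n-1}(At^2 + Bt + C)$, and the three coefficients can be extracted by enumerating the permutations contributing to each power of $t$. The term $C$ (coefficient of $t^{n-1}$) arises only from permutations combining $M[1,1]$ with $n-1$ antidiagonal entries; iteratively forcing $\sigma(n) = 2$, $\sigma(n-1) = 3$, $\ldots$ shows there is a unique such permutation, $\sigma(i) = n+2-i$ for $2 \le i \le n$, which yields $C = (-1)^{m_1+1} n^n b^{n-1}$ once its sign is tracked. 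The term $A$ (coefficient of $t^{n+1}$) comes from permutations of two types: $\sigma(1) = 1$ together with two corner entries and $n-3$ antidiagonal entries, or $\sigma(1) \ne 1$ (covering row/column 1 with endpoint entries $2t$ or $at$) together with one corner entry and $n-3$ interior antidiagonal entries; summing the contributions over both types produces $A = (-1)^{m_1}(n-2)^{n-2}(a^2 - 4b)$. The cases $n = 3, 4$ are treated separately since there the corner block overlaps the $(n+2)$-antidiagonal; the cubic case matches the classical cubic discriminant formula.

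The principal obstacle is the middle coefficient $B = (-1)^{m_1}\gamma_{\bm c}$, which collects all Leibniz summands of $t$-degree exactly $n$. These split into permutations using $M[1,1]$ together with one corner entry and $n-2$ antidiagonal entries, and permutations that omit $M[1,1]$ and cover row and column 1 by the endpoint entries of the antidiagonals. I plan to organize the sum by $k$, the number of entries selected from the $(n+2)$-antidiagonal: each such selection blocks $k$ rows and $k$ columns, and the complementary matching on the other two antidiagonals decomposes into subcases indexed by which corner entry and which pair of endpoint entries are used. A combinatorial count of matchings in an antidiagonal band with blocked positions produces the binomial factors $\binom{n-k-3}{k}$ and $\binom{n-k-4}{k}$ appearing in $S_k$, while the three terms of $S_k$ correspond to three subcase-types: the $a^4$ term from using the $(n+1)$-antidiagonal at both of its endpoint entries, the $a^2 b$ term from mixing the antidiagonals with a single $-nbt$ contribution, and the $b^2$ term from two $-nbt$ contributions. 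The global sign $(-1)^{m_1}$ with $m_1 = \lceil (n-3)/2 \rceil$ encodes the signature of the antidiagonal-reversal permutation. Finally, the vanishing of $\gamma_{\bm c}$ when $a = 0$ and $n$ is odd is read off from the formula: no integer $k$ satisfies $n - 2k - 4 = 0$ when $n$ is odd, so every monomial in the sum carries a positive power of $a$ and hence vanishes at $a = 0$.
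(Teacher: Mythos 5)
Your setup is correct and your route is genuinely different from the paper's. You verify the explicit form of $M_n(f_{\bm c})$ by bilinearity and \cref{prop2.1}(vi) (this matches the paper's Eq.~\eqref{eq6.1} exactly), and then propose to evaluate $\det M_n(f_{\bm c})$ by a direct Leibniz expansion: the $t$-degree count giving $\det = t^{n-1}(At^2+Bt+C)$ is sound, the unique permutation giving $C$ is correctly identified, and the organization of the $t^n$-coefficient by the number $k$ of entries taken from the $(n+2)$-antidiagonal is the right combinatorial skeleton --- permutations supported on the three antidiagonals are, after reversing columns, permutations in a tridiagonal band, so they decompose into $k$ ``adjacent transpositions'' (each pairing one $-(n-2)t$ with one $-nbt$) and fixed points (each a $-(n-1)at$), which is exactly where the monomials $n^k(n-1)^{n-2k-4}(n-2)^k a^{n-2k-4}b^k$ and the path-matching binomials $\binom{n-k-3}{k}$, $\binom{n-k-4}{k}$ come from. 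The paper instead never touches the Leibniz formula: it congruence-reduces $A_{\bm c}(t)$ by explicit elementary matrices to a form with a single nontrivial $2\times2$ block (\cref{lem6.4}), whose entries are solved in closed form from a three-term linear recurrence, and then extracts the binomials from the expansion of $\bigl(r\pm\sqrt{r^2-4qs}\bigr)^m$ via Riordan's identities (\cref{lem6.6}). Your method trades the recurrence-solving and the surd manipulations for a permutation count; the paper's method trades the permutation count for linear algebra. Each is a legitimate way to reach $S_k$.

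The genuine weakness is that the middle coefficient $B=(-1)^{m_1}\gamma_{\bm c}$ --- which is essentially the entire content of the theorem --- is asserted rather than computed. Saying that the three terms of $S_k$ ``correspond to three subcase-types'' does not establish the coefficient $5n^2-(6k+23)n+10k+24$ nor the relative weights of the two binomials, and this is where all the difficulty sits: for permutations with $\sigma(1)\neq 1$ you must cover row $1$ and column $1$ using the endpoint entries $2t$ and $at$ in all four combinations, reconcile each choice with the residual band permutation on rows $2,\dots,n$ (whose admissible columns shift depending on which endpoints are consumed), and track the sign of every class against the reference sign $(-1)^{m_1}$; the $\sigma(1)=1$ classes with one corner entry contribute the cross terms between $\{2t^2,at^2,(a^2-2b)t^2\}$ and the band. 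Until that bookkeeping is done and shown to reproduce $S_k$ exactly (a check against $n=5,6,7$ would be a minimal sanity test), the proof is a plausible program rather than a proof. The same remark applies, less severely, to the claimed value $A=(-1)^{m_1}(n-2)^{n-2}(a^2-4b)$, where the two permutation types must be seen to produce the $-4b$ and the $a^2$ with no spurious cross terms.
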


As a corollary of the above we get.

\begin{cor} \label{thm6.1}
Suppose $n\geq 4$ is an even integer. Moreover, let us suppose $b \neq 0$ and $(n-1)^2a^2/4n(n-2)\leq b$. Then we have $N_{f_{\bm c}(t; x)}=0$ for any positive real number $t$.
\end{cor}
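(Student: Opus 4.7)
The plan is to apply Theorem~\ref{thm5.2} to the vector $\bm c = (b,a,1) \in \R^3$, so that $g_{\bm c}(x) = x^2+ax+b$ with $s=2$ and leading coefficient $r_s = r_2 = 1 > 0$. For every $n \geq 4$ one has $(n-1)^2 > n(n-2)$, so the hypothesis $(n-1)^2 a^2 \leq 4n(n-2)\, b$ forces $a^2 < 4b$ whenever $a \neq 0$, while $a=0$ combined with $b \neq 0$ and $b \geq 0$ also yields $a^2 = 0 < 4b$; in particular $b > 0$ in every case. Consequently $g_{\bm c}$ is separable with no real root, i.e.\ $\gamma := N_{g_{\bm c}(x)} = 0$. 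Since $n$ is even, $n - s = n - 2$ is even as well, and Theorem~\ref{thm5.2} yields $N_{f_{\bm c}(t;x)} = \gamma = 0$ for every $t > \alpha_{\bm c}$, where $\alpha_{\bm c}$ is the largest real zero of $P_{\bm c}(t) = \det M_n(f_{\bm c}(t;x))$. Since $f_{\bm c}$ is monic in $x$, Lemma~\ref{prop2.1}(v) identifies $P_{\bm c}(t) = \Delta(f_{\bm c}(t;x))$, so the corollary will follow once we establish $\alpha_{\bm c} \leq 0$.

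To this end, Theorem~\ref{thm6.0}(2) factors the discriminant as
\[
\Delta(f_{\bm c}(t;x)) \;=\; (-1)^{m_1}\, t^{n-1}\, Q(t), \qquad Q(t) \;:=\; (n-2)^{n-2}(a^2-4b)\, t^2 \,+\, \gamma_{\bm c}\, t \,-\, n^n b^{n-1}.
\]
The positive real roots of $P_{\bm c}$ coincide with those of $Q$, so the task reduces to showing $Q$ has no positive real root. One checks that $Q(0) = -n^n b^{n-1} < 0$ (as $n-1$ is odd and $b > 0$) and that the leading coefficient $(n-2)^{n-2}(a^2 - 4b)$ is negative; hence $Q$ is a downward-opening parabola with $Q(0) < 0$. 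If $Q$ has real roots at all, their product equals $-n^n b^{n-1}/\bigl((n-2)^{n-2}(a^2-4b)\bigr) > 0$, so both roots carry a common sign; they are both $\leq 0$ precisely when their sum $\gamma_{\bm c}/\bigl((n-2)^{n-2}(4b-a^2)\bigr)$ is $\leq 0$, i.e.\ precisely when $\gamma_{\bm c} \leq 0$.

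The main obstacle is therefore to verify the inequality $\gamma_{\bm c} \leq 0$ under the hypothesis $a^2 \leq 4 n(n-2) b/(n-1)^2$. Since $n$ is even, the sign factor simplifies to $(-1)^{n+k} = (-1)^k$, and each $S_k$ is an explicit quadratic form in the variables $(a^2, b)$. My approach would be to substitute the upper bound on $a^2$ into each $S_k$, thereby expressing it as a polynomial in $b$ of degree two, and to weigh the $a^4$- and $a^2 b$-contributions against the $b^2$-contribution; a careful rearrangement exploiting the Pascal identity $\binom{n-k-3}{k} = \binom{n-k-4}{k} + \binom{n-k-4}{k-1}$ to telescope consecutive summands should combine with the alternating signs $(-1)^k$ to yield $\gamma_{\bm c} \leq 0$. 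Once this is in hand, the vertex of $Q$ lies at some $t \leq 0$, so $Q$ is strictly decreasing on $[0,\infty)$ starting from a negative value at $t=0$; hence $Q < 0$ on $(0,\infty)$, forcing $\alpha_{\bm c} \leq 0$ and completing the proof.
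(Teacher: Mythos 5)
There is a genuine gap, and in fact the central reduction of your argument is to a statement that is false. You reduce the corollary to showing $\alpha_{\bm c}\leq 0$, equivalently that $Q(t)$ has no positive real root, and then further to the inequality $\gamma_{\bm c}\leq 0$. Take $n=4$, $a=0$, $b>0$ (which satisfies the hypotheses $b\neq 0$ and $(n-1)^2a^2/4n(n-2)=0\leq b$): the paper's explicit formula gives
\[
\Delta\left(x^4+t(x^2+b)\right)=-t^3\left\{-16b\,t^2+128b^2\,t-256b^3\right\}=16b\,t^3(t-4b)^2 ,
\]
so $\gamma_{\bm c}=128b^2>0$ and $P_{\bm c}$ vanishes at $t=4b>0$, i.e.\ $\alpha_{\bm c}=4b>0$. (The discriminant vanishes there because $x^4+4bx^2+4b^2=(x^2+2b)^2$ has a repeated pair of \emph{complex} roots; no real roots appear, and indeed $N_{f_{\bm c}(t;x)}=0$ for all $t>0$.) The same phenomenon occurs for every $n\equiv 0\pmod 4$ with $a=0$. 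So \cref{thm5.2}, which only controls $t>\alpha_{\bm c}$, cannot by itself yield the conclusion on all of $(0,\infty)$, and the inequality $\gamma_{\bm c}\leq 0$ you propose to establish does not hold under the corollary's hypotheses. Separately, even where your plan is not contradicted, the key step is only sketched (``a careful rearrangement \dots should combine \dots to yield $\gamma_{\bm c}\leq 0$'') rather than proved.

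The paper's route avoids this trap: it carries out the exact computation only at the boundary $b=(n-1)^2a^2/4n(n-2)$, where $Q(t)$ collapses to $-\frac{(n-2)^{n-3}a^2}{n}\bigl(t+c\bigr)^2$ with $c\geq 0$ for even $n$, giving $\alpha_{\bm c}=0$ and hence $N_{f_{\bm c}(t;x)}=0$ for $t>0$ in that case via \cref{thm5.2}. It then passes to general $b$ above the boundary by the elementary observation that increasing $b$ adds the nonnegative quantity $t(b-b_0)$ to a function that is already everywhere positive (even degree, positive leading coefficient, no real roots). If you want to salvage your approach, you would need some such monotonicity-in-$b$ step; an argument that tries to force $\alpha_{\bm c}\leq 0$ throughout the whole region cannot succeed.
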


By a direct computation, we have
\begin{align*}
&\Delta\left(x^3+t(x^2+ax+b)\right)=t^2\left\{ (a^2-4b)t^2-(4a^3-18ab)t-27b^2 \right\}, \\
&\Delta\left(x^4+t(x^2+ax+b)\right)=-t^3\left\{ (4a^2-16b)t^2+(27a^4-144a^2b+128b^2)t-256b^3 \right\}
\end{align*}
and we get \cref{thm6.0} for $n=3$, $4$. Moreover, if $n=4$ and $(n-1)^2a^2/4n(n-2)=b$, we have
\begin{align*}
P_{\bm c}(t)=\Delta\left(x^4+t(x^2+ax+b)\right)=\frac{1}{2}a^2\left(t+\frac{27}{8}a^2 \right)^2t^3
\end{align*}
and hence $\alpha_{\bm c}=0$, which implies $N_{f_{\bm c}(t; x)}=0$ for any $t>0$ by \cref{thm5.2} since $x^2+ax+b$ has no real root in this case. Then, by considering the graph of the function $y=x^4+t(x^2+ax+b)$, we have $N_{f_{\bm c}(t; x)}=0$ whenever $(n-1)^2a^2/4n(n-2)\leq b$ and $t>0$, which is the claim of \cref{thm6.1} for $n=4$. Therefore, let us assume $n \geq 5$ hereafter.

In the following, we put 
\begin{align*}
&A_{\bm c}(t)=(a_{ij}^{(\bm c)}(t))_{1 \leq i, j \leq n}=M_n(f_{\bm c}(t; x)) \in \mathrm{Sym}_n(\R(t)), \\
&B_{\bm c}=(b_{ij}^{(\bm c)})_{1 \leq i,j \leq 2}=M_2(g_{\bm c}(x)) \in \mathrm{Sym}_{2}(\R), \\
&P_{\bm c}(t)=\det A_{\bm c}(t)=\Delta(f_{\bm c}(t; x)). 
\end{align*}
Then, by   \cref{thm5.2}, we have the next lemma.
\begin{cor} \label{lem6.1}
Suppose $n$ is even and $a^2/4<b$. Put $\alpha_{\bm c}=\max \{ \alpha \in \R \mid P_{\bm c}(\alpha)=0 \}$. Then, for any real number $t>\alpha_{\bm c}$, we have $N_{f_{\bm c}(t, x)}=0$.
\end{cor}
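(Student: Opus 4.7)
The plan is to apply \cref{thm5.2} directly with the vector ${\bm c}=(b,a,1)$ and $s=2$, so that $g_{\bm c}(x)=x^2+ax+b$ plays the role of $g_{\bm r}$. The work amounts to checking that all hypotheses of \cref{thm5.2} are satisfied and then reading off which branch of the case distinction applies.

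First I would verify that $g_{\bm c}(x)$ is a degree-$2$ separable polynomial with $N_{g_{\bm c}}=0$. The hypothesis $a^2/4<b$ is equivalent to $a^2-4b<0$, i.e.\ the discriminant of the quadratic $g_{\bm c}$ is strictly negative. This simultaneously shows that $g_{\bm c}$ has two distinct (complex-conjugate) roots, so it is separable, and that it has no real roots, giving $\gamma:=N_{g_{\bm c}}=0$. Thus the setup of \cref{thm5.2} applies with $s=2$ and $\gamma=0$.

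Next I would identify which case of the trichotomy in \cref{thm5.2} governs this situation. Since $n$ is assumed even and $s=2$, the difference $n-s=n-2$ is even, so we are in the ``$n-s$ even'' branch. The value of $r_s$ is the leading coefficient of $g_{\bm c}(x)=x^2+ax+b$, which is $r_2=1>0$; hence the subcase $r_s>0$ applies, and \cref{thm5.2} yields $N_{f_{\bm c}(t;x)}=\gamma=0$ for every $t>\alpha_{\bm c}$, where $\alpha_{\bm c}$ is the largest real root of $P_{\bm c}(t)=\det M_n(f_{\bm c}(t;x))=\Delta(f_{\bm c}(t;x))$.

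There is essentially no obstacle here beyond bookkeeping: the corollary is a direct specialization of \cref{thm5.2} once one checks that $a^2<4b$ forces both separability and the absence of real roots of $g_{\bm c}$. The only mild subtlety is to notice that the notation $P_{\bm c}(t)$ defined immediately before the corollary coincides, through \cref{prop2.1}(v) and the fact that $f_{\bm c}(t;x)$ is monic in $x$, with the quantity whose largest real root determines the threshold $\alpha_{\bm c}$ in \cref{thm5.2}; with this identification in hand, the conclusion is immediate.
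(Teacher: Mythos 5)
Your proposal is correct and is exactly the paper's argument: the paper derives this corollary as an immediate application of \cref{thm5.2}, and you have simply spelled out the verification that $a^2/4<b$ makes $g_{\bm c}$ separable with $\gamma=0$, that $n-s=n-2$ is even with $r_s=1>0$, and that $P_{\bm c}(t)$ agrees with the polynomial in \cref{thm5.2}. Nothing is missing.
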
 

To prove \cref{thm6.0}, we will use the equality $\Delta(f_{\bm c}(t; x))=\det A_{\bm c}(t)$. Also, to prove \cref{thm6.1}, it is enough to prove $\alpha_{\bm c}=0$ when $(n-1)^2a^2/4n(n-2)\leq b$ and $(a,b)\neq(0,0)$, which implies we need to compute the polynomial $P_{\bm c}(t)=\det A_{\bm c}(t)$ precisely. Here, let $Q_{m}(k;c)=(q _{ij})_{1\leq i,j\leq m}$,  $R_{m}(k,l;c)=(r _{ij})_{1\leq i,j\leq m}$ be $m\times m$ elementary matrices such that
\begin{center}
\noindent $Q_m(k;c)$=
\scalebox{0.85}[1]
{$\left[
\begin{array}{ccccccc} 
1 & & & & & & \\ 
& \ddots & & & & & \\ 
& & 1 & & & & \\ 
& & & c & & & \\ 
& & & & 1 & & \\ 
& & & & & \ddots & \\ 
& & & & & & 1
\end{array}
\right]
$},
$R_m(k,l;c)$=
\scalebox{0.85}[1]
{$\left[
\begin{array}{cccccccc} 
1 & & & & & & & \\ 
& \ddots & & & & & & \\ 
& & 1 & & & c & \\ 
& & & \ddots & & & & \\ 
& & & & & 1 & & \\ 
& & & & & & \ddots & \\ 
& & & & & & & 1
\end{array}
\right],
$}
\end{center}
where $q _{kk}=c$, $r _{kl}=c$, respectively and, for any $m \times m$ matrices $M_1$, $M_2$, $\cdots$, $M_l$, put
$\prod_{k=1}^{l} M_k=M_1M_2\cdots M_l$. Moreover, put $l_k=n+k$. Then, as is the case in \cite{o-sh}, we inductively define the matrix $A_{\bm c}(t)_k$ $(1 \leq k\leq n-2)$ as follows:

\begin{itemize}
\item[(1)] $A_{\bm c}(t)_1=(a_{ij}^{(\bm c)}(t)_1)_{1 \leq i,j \leq n}={}^tS_{\bm c}(t)_1A_{\bm c}(t)S_{\bm c}(t)_1$, where 
\[
S_{\bm c}(t)_1=\displaystyle Q_{n}(1;1/\sqrt{n})\prod_{k=0}^{1}R_n(1,l_k-1;-a_{1, l_k-1}^{(\bm c)}(t)/\sqrt{n}). 
\]
\item[(2)] Put
\[
n_0 =
\begin{cases}
(n-1)/2, & \text{$n$ $:$ odd}, \\
n/2, & \text{$n$ $:$ even}.
\end{cases}
\]
Then, $A_{\bm c}(t)_{k}=(a_{ij}^{(\bm c)}(t)_k)_{1 \leq i,j \leq n}={}^tS_{\bm c}(t)_kA_{\bm c}(t)_{k-1}S_{\bm c}(t)_k$, where
\[
S_{\bm c}(t)_k=
\begin{cases}
\displaystyle \prod_{m=l_0-k+1}^{n}R_n\left(l_0-k,m;-\dfrac{a_{km}^{({\bm c})}(t)_{k-1}}{-(n-2)t}\right), &     \hspace{-24mm}\text{($2 \leq k \leq n_0$)}  \\[3mm]
\displaystyle R_n\left(l_0-k,k;-\dfrac{a_{kk}^{({\bm c})}(t)_{k-1}}{-2(n-2)t}\right)\prod_{m=k+1}^{n}R_n\left(l_0-k,m;-\dfrac{a_{km}^{({\bm c})}(t)_{k-1}}{-(n-2)t}\right), \; \;  \text{($n_0 < k \leq n-2$)}.
\end{cases}
\] 
\end{itemize}

By the definition of $A_{\bm c}(t)_k$ $(1 \leq k\leq n-2)$, we have $\det A_{\bm c}(t)=n\det A_{\bm c}(t)_{n-2}$. Therefore, to compute the polynomial $P_{\bm c}(t)=\det A_{\bm c}(t)$, let us first compute the matrix $A_{\bm c}(t)_{n-2}$ concretely. 

\section{Computation of the matrix $A_{\mbox{\scriptsize \boldmath $c$}}(t)_{n-2}$}
By \cref{prop2.1}, we have 
\[
B_{\bm c} =M_2(x^2+ax+b, 2x+a) =2M_2(x^2, x)+aM_2(x^2, 1)+(a^2-2b)M_2(x,1) =
\left[
\begin{array}{cc}
2 & a \\
a & a^2-2b  
\end{array}
\right].
\]
Here, let us give some examples of the matrices $A_{\bm c}(t)$ and $A_{\bm c}(t)_1$ for some small $n$.

\begin{exa} \label{exa6.1}

\begin{enumerate}

\item
Let $n=5$. Then, 
\begin{align*}
A_{\bm c}(t)&=
\scalebox{0.9}[0.9]
{$\left[
\begin{array}{ccccc}
5 & 0 & 0 & 2t & at \\
0 & 0 & -3t & -4at & -5bt \\
0 & -3t & -4at & -5bt & 0 \\
2t & -4at & -5bt & 2t^2 & at^2 \\
at & -5bt & 0 & at^2 & (a^2-2b)t^2
\end{array}
\right]
$}, \quad 
A_{\bm c}(t)_1&=
\scalebox{0.9}[0.9]
{$\left[
\begin{array}{ccccc}
1 & 0 & 0 & 0 & 0 \\
0 & 0 & -3t & -4at & -5bt \\
0 & -3t & -4at & -5bt & 0 \\
0 & -4at & -5bt & (6/5)t^2 & (3/5)at^2 \\
0 & -5bt & 0 & (3/5)at^2 & \left\{(4/5)a^2-2b\right\}t^2
\end{array}
\right]
$}.
\end{align*}

\item
Let $n=6$. Then,
\begin{small}
\[
A_{\bm c}(t) =
\scalebox{0.9}[0.9]
{$\left[
\begin{array}{cccccc}
6 & 0 & 0 & 0 & 2t & at \\
0 & 0 & 0 & -4t & -5at & -6bt \\
0 & 0 & -4t & -5at & -6bt & 0 \\
0 & -4t & -5at & -6bt & 0 & 0 \\
2t & -5at & -6bt & 0 & 2t^2 & at^2 \\
at & -6bt & 0 & 0 & at^2 & (a^2-2b)t^2
\end{array}
\right]
$}, \;
A_{\bm c}(t)_1 =
\scalebox{0.9}[0.9]
{$\left[
\begin{array}{cccccc}
1 & 0 & 0 & 0 & 0 & 0 \\
0 & 0 & 0 & -4t & -5at & -6bt \\
0 & 0 & -4t & -5at & -6bt & 0 \\
0 & -4t & -5at & -6bt & 0 & 0 \\
0 & -5at & -6bt & 0 & \frac 4 3 t^2 & \frac 2 3 at^2 \\
0 & -6bt & 0 & 0 & \frac 2 3 at^2 & \left( \frac 5 6 a^2-2b \right) t^2
\end{array}
\right]
$}.
\]
\end{small}
\end{enumerate}
\qed
\end{exa}

Let's continue our discussion for  $n\geq 5$. Then, by \cite[Prop~4]{o-sh} and  \cite[Eq.~(5)]{o-sh}, we have
\begin{align} \label{eq6.1}
A_{\bm c}(t)
=
\scalebox{0.81}[0.81]
{$\left[
\begin{array}{cccccccc}
n & 0 & \dots & \dots & \dots & 0 & 2t & at \\
0 &&&&& -(n-2)t & -(n-1)at & -nbt \\
\vdots &&&&  \rotatebox[origin=c]{292}{\vdots} &  -(n-1)at & -nbt & 0 \\
\vdots &&& \rotatebox[origin=c]{292}{\vdots} & \hspace{3mm}  \rotatebox[origin=c]{292}{\vdots} & -nbt & 0 & \vdots \\ 
\vdots && \rotatebox[origin=c]{292}{\vdots} & \hspace{3mm}  \rotatebox[origin=c]{292}{\vdots} & \hspace{3mm}  \rotatebox[origin=c]{292}{\vdots} & \hspace{3mm}  \rotatebox[origin=c]{292}{\vdots} & \vdots & \vdots \\
0 & -(n-2)t & -(n-1)at & -nbt & \hspace{3mm}  \rotatebox[origin=c]{292}{\vdots} && 0 & 0 \\ 
2t & -(n-1)at & -nbt & 0 & \dots & 0 & 2t^2 & at^2 \\  
at & -nbt & 0 & \dots & \dots & 0 & at^2 & (a^2-2b)t^2 
\end{array}
\hspace{-0.5mm}\right]
$}
\end{align}
and hence
\begin{align} \label{eq6.2}
A_{\bm c}(t)_1
=
\scalebox{0.82}[0.82]
{$\left[
\begin{array}{cccccccc}
1 & 0 & \dots & \dots & \dots & 0 & 0 & 0 \\
0 &&&&& -(n-2)t & -(n-1)at & -nbt \\
\vdots &&&&  \rotatebox[origin=c]{292}{\vdots} &  -(n-1)at & -nbt & 0 \\
\vdots &&& \rotatebox[origin=c]{292}{\vdots} & \hspace{3mm}  \rotatebox[origin=c]{292}{\vdots} & -nbt & 0 & \vdots \\ 
\vdots && \rotatebox[origin=c]{292}{\vdots} & \hspace{3mm}  \rotatebox[origin=c]{292}{\vdots} & \hspace{3mm}  \rotatebox[origin=c]{292}{\vdots} & \hspace{3mm}  \rotatebox[origin=c]{292}{\vdots} & \vdots & \vdots \\
0 & -(n-2)t & -(n-1)at & -nbt & \hspace{3mm}  \rotatebox[origin=c]{292}{\vdots} && 0 & 0 \\ 
0 & -(n-1)at & -nbt & 0 & \dots & 0 & 2\left(1-\frac{2}{n}\right)t^2 &  \left(1-\frac{2}{n}\right)at^2 \\  
0 & -nbt & 0 & \dots & \dots & 0 & \left(1-\frac{2}{n}\right)at^2 & \left\{\left(1-\frac{1}{n}\right)a^2-2b\right\}t^2
\end{array}
\right].
$} 
\end{align}
Here, similar to $A_{\bm c}(t)_{k}$ $(2 \leq k \leq n-2)$, we inductively define the matrix $W(t)_{k}=(w_{ij}(t)_k)_{1\leq i,j \leq n}$ $(2 \leq k \leq n-2)$ as follows; \\[3mm]
(1) $W(t)_1=(w_{ij}(t)_1)_{1 \leq i,j \leq n}$, where
\begin{align*}
w_{ij}(t)_1
=
\begin{cases}
qt, & \text{$i+j=n$, $2 \leq i, j \leq n-2$}, \\
rt, & \text{$i+j=n+1$, $2 \leq i, j \leq n-1$}, \\
st, & \text{$i+j=n+2$, $2 \leq i, j \leq n$}, \\
0, & \text{otherwise}.
\end{cases}
\end{align*}
(2)  $W(t)_{k}=(w_{ij}(t)_k)_{1\leq i,j \leq n}$ $(2 \leq k \leq n-2)={}^tS(t)_kW(t)_{k-1}S(t)_k$, where
\begin{align*}
S(t)_k=
\begin{cases}
\displaystyle \prod_{m=n-k+1}^{n}R_n\left(n-k,m;-\dfrac{w_{km}(t)_{k-1}}{qt}\right) &\hspace{-24mm}\text{($2 \leq k \leq n_0$)}, \\[3mm]
\displaystyle R_n\left(n-k,k;-\dfrac{w_{kk}(t)_{k-1}}{-2qt}\right)\prod_{m=k+1}^{n}R_n\left(n-k,m;-\dfrac{w_{km}(t)_{k-1}}{qt}\right), \hfill \text{($n_0 < k \leq n-2$)}.
\end{cases}
\end{align*}

\begin{exa}\label{exa3}

\begin{enumerate}
\item Let $n=5$. Then, 
\begin{align*}
W(t)_1 =
\scalebox{0.9}[0.9]
{$\left[
\begin{array}{ccccc}
0 & 0 & 0 & 0 & 0 \\
0 & 0 & qt & rt & st \\
0 & qt & rt & st & 0 \\
0 & rt & st & 0 & 0 \\
0 & st & 0 & 0 & 0
\end{array}
\right]
$}, \; \; 
& W(t)_{n_0}=
\scalebox{0.9}[0.9]
{$\left[
\begin{array}{ccccc}
0 & 0 & 0 & 0 & 0 \\
0 & 0 & qt & 0 & 0 \\
0 & qt & rt & (qs-r^2)t/q & -rst/q \\
0 & 0 & (qs-r^2)t/q & -(2qs-r^2)rt/q^2 & -(qs-r^2)st/q^2 \\
0 & 0 & -rst/q & -(qs-r^2)st/q^2 & rs^2t/q^2
\end{array}
\right]
$}, \\
W(t)_{n-2}&=
\scalebox{0.9}[0.9]
{$\left[
\begin{array}{ccccc}
0 & 0 & 0 & 0 & 0 \\
0 & 0 & qt & 0 & 0 \\
0 & qt & 0 & 0 & 0 \\
0 & 0 & 0 & -(2qs-r^2)rt/q^2 & -(qs-r^2)st/q^2 \\
0 & 0 & 0 & -(qs-r^2)st/q^2 & rs^2t/q^2
\end{array}
\right]
$}.
\end{align*}

\item
Let $n=6$. Then,
\begin{align*}
W(t)_1&=
\scalebox{0.9}[0.9]
{$\left[
\begin{array}{cccccc}
0 & 0 & 0 & 0 & 0 & 0 \\
0 & 0 & 0 & qt & rt & st \\
0 & 0 & qt & rt & st & 0 \\
0 & qt & rt & st & 0 & 0 \\
0 & rt & st & 0 & 0 & 0 \\
0 & st & 0 & 0 & 0 & 0
\end{array}
\right]
$}, \\
W(t)_{n_0}&=
\scalebox{0.88}[0.9]
{$\left[
\begin{array}{cccccc}
0 & 0 & 0 & 0 & 0 & 0 \\
0 & 0 & 0 & qt & 0 & 0 \\
0 & 0 & qt & 0 & 0 & 0 \\
0 & qt & 0 & (qs-r^2)t/q & -(2qs-r^2)rt/q^2 & -(qs-r^2)st/q^2 \\
0 & 0 & 0 & -(2qs-r^2)rt/q^2 & -(q^2s^2-3qr^2s+r^4)t/q^3 & (2qs-r^2)rst/q^3 \\
0 & 0 & 0 & -(qs-r^2)st/q^2 & (2qs-r^2)rst/q^3 & (qs-r^2)s^2t/q^3
\end{array}
\right]
$}, \\
W(t)_{n-2}&=
\scalebox{0.9}[0.9]
{$\left[
\begin{array}{cccccc}
0 & 0 & 0 & 0 & 0 & 0 \\
0 & 0 & 0 & qt & 0 & 0 \\
0 & 0 & qt & 0 & 0 & 0 \\
0 & qt & 0 & 0 & 0 & 0 \\
0 & 0 & 0 & 0 & -(q^2s^2-3qr^2s+r^4)t/q^3 & (2qs-r^2)rst/q^3 \\
0 & 0 & 0 & 0 & (2qs-r^2)rst/q^3 & (qs-r^2)s^2t/q^3
\end{array}
\right]
$}.
\end{align*}
\end{enumerate}
\end{exa}

In the same way, let us compute the matrix $W(t)_{n-2}$ for any integer $n\geq 7$.

\begin{lem} \label{lem6.2}
Suppose $n\geq 7$. Let $\{ x_m \}$, $\{ y_m \}$ be sequences defined by the next recurrence relations;
\begin{align*}
&x_0=0, \ x_1=-qt, \ x_2=rt, \ x_{m+2}=-\frac{r}{q}x_{m+1}-\frac{s}{q}x_m \ (m\geq 1), \\ 
&y_0=y_1=0, \ y_{m+1}=-\frac{s}{q}x_{m} \ (m \geq 1)
\end{align*}
and put
\begin{align*}
n_1=
\begin{cases}
n_0-1 & \text{$n:$ odd}, \\
n_0-2 & \text{$n:$ even}.
\end{cases}
\end{align*}
Then, for any integer $k$ such that $2 \leq k \leq n_1$, we have
\begin{align*}
w_{ij}(t)_{k}
=
\begin{cases}
0 & \text{$(i,j)=(k, \ell)$ or $(\ell, k)$ $(n-k+1\leq \ell \leq n)$}, \\
x_{\ell-n+k+2} & \text{$(i,j)=(k+1, \ell)$ or $(\ell, k+1)$ $(n-k \leq \ell \leq n-1)$}, \\
y_{\ell-n+k+2} & \text{$(i,j)=(k+2, \ell)$ or $(\ell, k+2)$ $(n-k \leq \ell \leq n-1)$}, \\
- \frac {sx_{k}} q & \text{$(i,j)=(k+1, n)$ or $(n, k+1)$}, \\
- \frac {sy_{k}} q & \text{$(i,j)=(k+2, n)$ or $(n, k+2)$}, \\
w_{ij}(t)_{k-1} & \text{otherwise}.
\end{cases}
\end{align*} 
\end{lem}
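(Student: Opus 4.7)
The plan is to argue by induction on $k$, with base case $k=2$ verified by direct computation from the explicit matrix $W(t)_1$, and the inductive step passing from $k-1$ to $k$ for $3\le k\le n_1$.

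For the base case, I would substitute the three-antidiagonal structure of $W(t)_1$ (entries $qt$ on $i+j=n$, $rt$ on $i+j=n+1$, $st$ on $i+j=n+2$) into $W(t)_2={}^tS(t)_2\,W(t)_1\,S(t)_2$, where $S(t)_2$ prescribes adding $-r/q$ times column $n-2$ to column $n-1$ and $-s/q$ times column $n-2$ to column $n$ (plus the symmetric row operations). A direct substitution yields $(2,\ell)=0$ for $\ell\in\{n-1,n\}$, $(3,n-2)=rt=x_2$, $(3,n-1)=(qs-r^2)t/q=x_3$, $(3,n)=-rst/q=-sx_2/q$, $(4,n-2)=st=y_2$, $(4,n-1)=-rst/q=y_3$, and $(4,n)=-s^2t/q=-sy_2/q$, while every other entry is left unchanged, exactly matching the claim at $k=2$.

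For the inductive step, I would first verify that the pivot satisfies $w_{k,n-k}(t)_{k-1}=qt$: the position $(k,n-k)$ is never among the distinguished rows at any previous step, so the ``otherwise'' clause of the inductive hypothesis traces back to $w_{k,n-k}(t)_1=qt$ via $k+(n-k)=n$. The structural fact driving the rest of the argument is that the pivot column $n-k$ of $W(t)_{k-1}$ is supported only on rows $k$, $k+1$, $k+2$, with values $qt$, $rt$, $st$ respectively. For any other row index $i$ the sum $i+(n-k)$ lies outside $\{n,n+1,n+2\}$, so the corresponding entry vanishes in $W(t)_1$, and a careful audit shows that no previous step introduces a nonzero value at $(i,n-k)$: the operations at step $j\le k-1$ affect only positions with row or column index in $\{n-j+1,\ldots,n\}$, and every crossed entry that could contribute to such an update is itself zero by the same antidiagonal count. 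This audit uses the hypothesis $k\le n_1=m_0$, equivalently $k+3\le n-k$, which keeps the ``young'' rows $k+1$, $k+2$ disjoint from the ``old'' region $\{n-k+1,\ldots,n\}$.

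Given the sparse support of column $n-k$, the operations in ${}^tS(t)_k\,W(t)_{k-1}\,S(t)_k$ produce a nonzero change only in rows and columns indexed by $\{k,k+1,k+2\}$. Row $k$ is annihilated in columns $n-k+1,\ldots,n$ by the choice of coefficients. Writing $j=m-n+k+1$, the new entry at $(k+1,m)$ for $m\in\{n-k+1,\ldots,n-1\}$ equals $w_{k+1,m}(t)_{k-1}+(-w_{k,m}(t)_{k-1}/(qt))\cdot rt=y_j-(r/q)x_j$, and substituting $y_j=-(s/q)x_{j-1}$ produces $-(r/q)x_j-(s/q)x_{j-1}=x_{j+1}=x_{m-n+k+2}$ by the defining recurrence. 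The entry at $(k+2,m)$ becomes $-(s/q)x_j=y_{j+1}$, since $w_{k+2,m}(t)_{k-1}=0$ by the antidiagonal argument. The two entries in column $n$ are treated analogously, using the identity $x_k=y_{k-1}-(r/q)x_{k-1}$ (which follows from the two recurrences) to rewrite $-sy_{k-1}/q+rsx_{k-1}/q^2=-sx_k/q$, and using $y_k=-(s/q)x_{k-1}$ to identify $s^2x_{k-1}/q^2=-sy_k/q$. Symmetry of the matrix and of the transformation then fills in the corresponding column entries. The principal difficulty of the whole proof lies in the combinatorial bookkeeping required to establish the sparse support of the pivot column $n-k$, and the bound $k\le n_1$ is precisely what prevents the propagating ``bulge'' of nonzero entries from colliding with the still-untouched part of that column.
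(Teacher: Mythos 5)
Your proposal is correct and follows essentially the same route as the paper: induction on $k$ with base case $k=2$, using the sparse support $\{qt,rt,st\}$ of the pivot column $n-k$ in rows $k,k+1,k+2$ and the identities $y_j-(r/q)x_j=x_{j+1}$, $-(s/q)x_j=y_{j+1}$ to read off the updated entries, and the recurrences to handle the column-$n$ entries. The only difference is presentational: the paper establishes the support and the unchanged "otherwise" entries by displaying the matrices $W(t)_{m-1}$ and $W(t)_m$ explicitly, whereas you justify the same facts by the antidiagonal index count and the bound $k\le n_1$, which is a slightly more rigorous rendering of the same argument.
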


\begin{proof}
Let us prove this lemma by induction on $k$. First, suppose $k=2$. Then, by the definition of $W(t)_2$, we have
\begin{align*}
W(t)_2
=
\scalebox{0.76}[1]
{$\left[
\begin{array}{cccccccccc}
0 & \cdots & \cdots & \dots & \dots & \dots & 0 & 0 & 0 & 0 \\
\vdots &&&&&&& qt & 0 & 0 \\
\vdots &&&&&& qt & rt & \frac {(qs-r^2)t} q & - \frac {rst} q \\
\vdots &&&&& \rotatebox[origin=c]{300}{\vdots} & rt & st & - \frac {rst} q & - \frac {s^2t} q \\ 
\vdots &&&& \rotatebox[origin=c]{300}{\vdots} & \rotatebox[origin=c]{300}{\vdots} & st & 0 & 0 & 0 \\
\vdots &&& \rotatebox[origin=c]{300}{\vdots} & \rotatebox[origin=c]{300}{\vdots} & \rotatebox[origin=c]{300}{\vdots} & \rotatebox[origin=c]{300}{\vdots} & \rotatebox[origin=c]{300}{\vdots} & \rotatebox[origin=c]{300}{\vdots} & \vdots \\
0 & 0 & qt & rt & st & \rotatebox[origin=c]{300}{\vdots} & \rotatebox[origin=c]{300}{\vdots} & \rotatebox[origin=c]{300}{\vdots} && \vdots \\
0 & qt & rt & st & 0 & \rotatebox[origin=c]{300}{\vdots} & \rotatebox[origin=c]{300}{\vdots} &&& \vdots \\ 
0 & 0 & (qs-r^2) \frac t q & - \frac {rst} q & 0 & \rotatebox[origin=c]{300}{\vdots} &&&& \vdots \\  
0 & 0 & - \frac {rst} q & - \frac {s^2t} q & 0 & \cdots & \cdots & \cdots & \cdots & 0
\end{array}
\right],
$}
\end{align*}
which implies the claim of  \cref{lem6.2} for $k=2$ since
\begin{align*}
x_2=rt, \ x_3=\frac{(qs-r^2)t}{q}, \ y_2=st, \ y_3=-\frac{rst}{q}.
\end{align*}
Next, suppose  \cref{lem6.2} is true for $k=2, \cdots, m-1$ $(m-1<n_1)$. Then, since
\begin{align*}
W(t)_{m-1}
=
\scalebox{0.70}[0.7]
{$\left[
\begin{array}{ccccccccccccc}
0 & \cdots & \cdots & \cdots & \cdots & \cdots & \cdots & \cdots & \cdots & \cdots & \cdots & 0 & 0 \\
\vdots &&&&&&&&& \rotatebox[origin=c]{300}{\vdots} & \rotatebox[origin=c]{300}{\vdots} & \vdots & \vdots \\ 
\vdots &&&&&&&& qt & 0 & \cdots & 0 & 0 \\
\vdots &&&&&&& qt & x_2 & x_3 & \cdots & x_{m} & -sx_{m-1}/q \\
\vdots &&&&&& \rotatebox[origin=c]{300}{\vdots} & rt & y_2 & y_3 & \cdots & y_{m} & -sy_{m-1}/q \\
\vdots &&&&& \rotatebox[origin=c]{300}{\vdots} & \rotatebox[origin=c]{300}{\vdots} & st & 0 & 0 & \cdots & 0 & 0 \\
\vdots &&&& \rotatebox[origin=c]{300}{\vdots} & \rotatebox[origin=c]{300}{\vdots} & \rotatebox[origin=c]{300}{\vdots} & \rotatebox[origin=c]{300}{\vdots} & \rotatebox[origin=c]{300}{\vdots} &&& \rotatebox[origin=c]{300}{\vdots} & \vdots \\
\vdots &&& qt & rt & st & \rotatebox[origin=c]{300}{\vdots} & \rotatebox[origin=c]{300}{\vdots} &&& \rotatebox[origin=c]{300}{\vdots} && \vdots \\
\vdots && qt & x_2 & y_2 & 0 & \rotatebox[origin=c]{300}{\vdots} &&& \rotatebox[origin=c]{300}{\vdots} &&& \vdots \\
\vdots & \rotatebox[origin=l]{300}{\vdots} & 0 & x_3 & y_3 & 0 &&& \rotatebox[origin=c]{300}{\vdots} &&&& \vdots \\
\vdots & \rotatebox[origin=c]{300}{\vdots} & \vdots & \vdots & \vdots & \vdots && \rotatebox[origin=c]{300}{\vdots} &&&&& \vdots \\
0 & \cdots & 0 & x_{m} & y_{m} & 0 & \rotatebox[origin=c]{300}{\vdots} &&&&&& \vdots \\
0 & \cdots & 0 & -sx_{m-1}/q & -sy_{m-1}/q & 0 & \cdots & \cdots& \cdots & \cdots & \cdots & \cdots & 0
\end{array} 
\right],
$}
\end{align*}
\normalsize
we have
\begin{align*}
W(t)_{m}
=
\scalebox{0.72}[0.72]
{$\left[
\begin{array}{ccccccccccccc}
0 & \cdots & \cdots & \cdots & \cdots & \cdots & \cdots & \cdots & \cdots & \cdots & \cdots & 0 & 0 \\
\vdots &&&&&&&&& \rotatebox[origin=c]{300}{\vdots} & \rotatebox[origin=c]{300}{\vdots} & \vdots & \vdots \\ 
\vdots &&&&&&&& qt & 0 & \cdots & 0 & 0 \\
\vdots &&&&&&& qt & 0 & 0 & \cdots & 0 & 0 \\
\vdots &&&&&& \rotatebox[origin=c]{300}{\vdots} & rt & x_2^{\prime} & x_3^{\prime} & \cdots & x_{m}^{\prime} & x_{m+1}^{\prime} \\
\vdots &&&&& \rotatebox[origin=c]{300}{\vdots} & \rotatebox[origin=c]{300}{\vdots} & st & y_2^{\prime} & y_3^{\prime} & \cdots & y_{m}^{\prime} & y_{m+1}^{\prime} \\
\vdots &&&& \rotatebox[origin=c]{300}{\vdots} & \rotatebox[origin=c]{300}{\vdots} & \rotatebox[origin=c]{300}{\vdots} & \rotatebox[origin=c]{300}{\vdots} & \rotatebox[origin=c]{300}{\vdots} &&& \rotatebox[origin=c]{300}{\vdots} & \vdots \\
\vdots &&& qt & rt & st & \rotatebox[origin=c]{300}{\vdots} & \rotatebox[origin=c]{300}{\vdots} &&& \rotatebox[origin=c]{300}{\vdots} && \vdots \\
\vdots && qt & 0 & x_2^{\prime} & y_2^{\prime} & \rotatebox[origin=c]{300}{\vdots} &&& \rotatebox[origin=c]{300}{\vdots} &&& \vdots \\
\vdots & \rotatebox[origin=l]{300}{\vdots} & 0 & 0 & x_3^{\prime} & y_{3}^{\prime} &&& \rotatebox[origin=c]{300}{\vdots} &&&& \vdots \\
\vdots & \rotatebox[origin=c]{300}{\vdots} & \vdots & \vdots & \vdots & \vdots && \rotatebox[origin=c]{300}{\vdots} &&&&& \vdots \\
0 & \cdots & 0 & 0 & x_{m}^{\prime} & y_{m}^{\prime} & \rotatebox[origin=c]{300}{\vdots} &&&&&& \vdots \\
0 & \cdots & 0 & 0 & x_{m+1}^{\prime} & y_{m+1}^{\prime} & \cdots & \cdots& \cdots & \cdots & \cdots & \cdots & 0
\end{array} 
\right],
$}
\end{align*}
\normalsize
where
\begin{align*}
&x_{\ell}^{\prime}=y_{\ell}-\frac{x_{\ell}}{qt}\cdot rt=-\frac{r}{q}x_{\ell}-\frac{s}{q}x_{\ell-1}=x_{\ell+1} \ (2 \leq \ell \leq m), \\
&y_{\ell}^{\prime}=-\frac{x_{\ell}}{qt}\cdot st=-\frac{s}{q}x_{\ell}=y_{{\ell}+1} \ (2 \leq \ell \leq m), \\
&x_{m+1}^{\prime}=-\frac{s}{q}y_{m-1}+\left(-\frac{r}{q}\right)\cdot \left(-\frac{s}{q}x_{m-1}\right)=-\frac{s}{q}\left( -\frac{r}{q}x_{m-1}-\frac{s}{q}x_{m-2} \right)=-\frac{s}{q}x_{m} \\
&y_{m+1}^{\prime}=\left(-\frac{s}{q}\right)\cdot \left(-\frac{s}{q}x_{m-1}\right)=-\frac{s}{q}y_{m}.
\end{align*}
This completes the proof of  \cref{lem6.2}.
\end{proof}

\begin{prop} \label{lem6.4}
For any integer $n\geq 5$, we have
\begin{align*}
w_{ij}(t)_{n-2}
=
\begin{cases}
qt & \text{$i+j=n$ $(2 \leq i,j \leq n-2)$}, \\
x_{n-1} & \text{$(i,j)=(n-1,n-1)$}, \\
y_{n-1} & \text{$(i,j)=(n-1,n)$ or $(n,n-1)$}, \\
(-s/q)y_{n-2} & \text{$(i,j)=(n,n)$}, \\
0 & \text{otherwise}.
\end{cases}
\end{align*}
\end{prop}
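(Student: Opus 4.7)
The plan is to continue the inductive computation of \cref{lem6.2} from $k = n_1$ through $k = n-2$. In that lemma the matrix $W(t)_k$ has been traced for $2 \le k \le n_1$, at which stage the two ``antidiagonal stripes'' carrying the sequences $\{x_m\}$ and $\{y_m\}$ have been created but have not yet reached the center of the matrix. What remains is to iterate $W(t)_k = {}^tS(t)_k\, W(t)_{k-1}\, S(t)_k$ for the values $k = n_1+1, \ldots, n-2$ and read off the final shape.

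The iteration splits naturally into two regimes. For $n_1 < k \le n_0$ the formula for $S(t)_k$ is still the first branch of its definition; the same row-reduction step as in \cref{lem6.2} applies, and it shifts the stripe indices by one in accordance with the recurrences $x_{m+2} = -(r/q)x_{m+1}-(s/q)x_m$ and $y_{m+1} = -(s/q)x_m$. For $n_0 < k \le n-2$ the second branch of $S(t)_k$ activates: its extra factor $R_n\!\left(n-k,k;-w_{kk}(t)_{k-1}/(-2qt)\right)$ symmetrically clears the pivot diagonal entry $(k,k)$. The cumulative effect is that at each such step the row and column labelled $k$ become zero outside the central antidiagonal $i+j=n$, while the antidiagonal entries $qt$ themselves are never touched, since they lie outside the elimination window. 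Iterating down to $k=n-2$ therefore collapses the matrix to the antidiagonal of $qt$'s together with a surviving $2\times 2$ block in the bottom-right corner. A direct reading of the last step of the recursion shows this corner block carries the values $x_{n-1}$, $y_{n-1}$, and $-\frac{s}{q}\,y_{n-2}$ in exactly the pattern stated.

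The main obstacle will be the bookkeeping around the transition $k = n_0$, where the top-right stripe of $\{x_m\},\{y_m\}$ entries meets its mirror image coming from the bottom-left. One must verify that these stripes cancel cleanly, rather than leaving spurious off-antidiagonal terms, and that the diagonal-clearing factor in the second branch contributes precisely $-\frac{s}{q}\,y_{n-2}$ to the $(n,n)$ corner. The parity of $n$ matters here---for $n$ even the two stripes overlap on a single ``middle'' entry, while for $n$ odd they do not---so the two parities should be handled separately, with the explicit calculations for $n = 5, 6$ in \cref{exa3} serving as base cases and as a sanity check for the bookkeeping. The linear recurrences defining $\{x_m\}$ and $\{y_m\}$ are tuned precisely so that the Gaussian-elimination steps carried out by $S(t)_k$ match the shifts $x_\ell \mapsto x_{\ell+1}$ and $y_\ell \mapsto y_{\ell+1}$, which is what makes the cancellations work.
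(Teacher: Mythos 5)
Your outline follows the same strategy as the paper (continue the elimination from $W(t)_{n_1}$ down to $W(t)_{n-2}$, splitting at $k=n_0$ and by parity), but it leaves the one genuinely hard step unproven. The sentence ``a direct reading of the last step of the recursion shows this corner block carries the values $x_{n-1}$, $y_{n-1}$, $-\frac{s}{q}y_{n-2}$'' is exactly where the work is. Once the two stripes meet, the new corner entries are \emph{not} obtained by applying one more step of the linear recurrence; they are quadratic expressions in half-index terms, e.g.\ for $n$ odd
\[
w_{n-1,n-1}(t)_{n_0}=-\frac{x_{(n-1)/2}}{qt}\,y_{(n-1)/2}-\frac{x_{(n-1)/2}}{qt}\,x_{(n+1)/2},
\]
and one must prove the ``doubling identities'' asserting that such products of terms of index about $n/2$ collapse to the single full-index terms $x_{n-1}$, $y_{n-1}$, $(-s/q)y_{n-2}$. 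The paper does this by first solving the recurrence in closed form, $x_m=\frac{(-1)^m\{(r+\sqrt{r^2-4qs})^m-(r-\sqrt{r^2-4qs})^m\}}{2^mq^{m-2}\sqrt{r^2-4qs}}\,t$, and then carrying out an explicit (and fairly long) computation with these binomial expressions. Your proposal never solves the recurrence and offers no substitute identity, so the claimed values of the corner block are unjustified.

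A second, smaller gap concerns the even case. There the stripes do not merely ``overlap on a single middle entry'': after the step $k=n_1+1$ a full dense block of products $(-x_\ell/qt)(-x_m/qt)y_2$ accumulates in the lower-right corner, and the paper needs a separate induction (the Claim about the matrices $V(t)_\ell$) just to describe $W(t)_{n_1+1}$ before it can compute $W(t)_{n_0}$. Your description of the second regime as simply ``clearing row and column $k$'' is correct for $n_0<k\le n-2$, but the transition producing $W(t)_{n_0}$ in the even case requires this extra bookkeeping, which your plan does not supply. In short: right skeleton, but the central identities and the even-case intermediate structure are missing.
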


\begin{proof}
 \cref{lem6.4} has been proved for $n=5$, $6$ in  \cref{exa3}. Thus, we suppose $n\geq 7$. First, by solving the recurrence relation given in  \cref{lem6.2}, we have 
\begin{align} \label{eq6.3}
x_m=
\begin{cases}
\frac{(-1)^mmr^{m-1}}{2^{m-1}q^{m-2}}t & \text{$(r^2-4qs=0)$}, \\[4mm]
\frac{(-1)^m\left\{ \left(r+\sqrt{r^2-4qs}\right)^m-\left(r-\sqrt{r^2-4qs}\right)^m \right\}}{2^mq^{m-2}\sqrt{r^2-4qs}}t & \text{$(r^2-4qs \neq 0)$}.
\end{cases}
\end{align}

Note that 
\begin{align*}
&\frac{(-1)^m\left\{ \left(r+\sqrt{r^2-4qs}\right)^m-\left(r-\sqrt{r^2-4qs}\right)^m \right\}}{2^mq^{m-2}\sqrt{r^2-4qs}}t =\frac{(-1)^mt}{2^mq^{m-2}}\cdot2\sum_{k=0}^{\lfloor \frac {m-1} 2 \rfloor}\binom{m}{2k+1}r^{m-2k-1}(r^2-4qs)^k
\end{align*}
and hence, by putting $r^2-4qs=0$, we have
\begin{align*}
\frac{(-1)^m\left\{ \left(r+\sqrt{r^2-4qs}\right)^m-\left(r-\sqrt{r^2-4qs}\right)^m \right\}}{2^mq^{m-2}\sqrt{r^2-4qs}}t=\frac{(-1)^mmr^{m-1}}{2^{m-1}q^{m-2}}t,
\end{align*}
the solution for the case of $r^2-4qs=0$. Here, suppose $n$ is odd. Then, we have $n_1= \frac {n-3} 2$ and hence by  \cref{lem6.2},
\begin{align*}
w_{ij}(t)_{n_1}
=
\begin{cases}
0 & \text{$(i,j)=((n-3)/2, \ell)$ or $(\ell, (n-3)/2)$ $((n+5)/2\leq \ell \leq n)$}, \\
x_{\ell-(n-1)/2} & \text{$(i,j)=((n-1)/2, \ell)$ or $(\ell, (n-1)/2)$ $((n+3)/2 \leq \ell \leq n-1)$}, \\
y_{\ell-(n-1)/2} & \text{$(i,j)=((n+1)/2, \ell)$ or $(\ell, (n+1)/2)$ $((n+3)/2 \leq \ell \leq n-1)$}, \\
-sx_{(n-3)/2}/q & \text{$(i,j)=((n-1)/2, n)$ or $(n, (n-1)/2)$}, \\
-sy_{(n-3)/2}/q & \text{$(i,j)=((n+1)/2, n)$ or $(n, (n+1)/2)$}, \\
w_{ij}(t)_{(n-5)/2} & \text{otherwise}.
\end{cases}
\end{align*} 
Therefore, we have the next expression of the matrix $W(t)_{n_1}$;
\[
W(t)_{n_1}
=
\scalebox{0.75}[0.75]
{$\left[
\begin{array}{ccccccccccc}
0 & \cdots & \cdots & \cdots & \cdots & \cdots & \cdots & \cdots & 0 & 0 \\
\vdots &&&&&& \rotatebox[origin=c]{300}{\vdots} && \vdots & \vdots \\ 
\vdots &&&&& qt & 0 & \cdots & 0 & 0 \\
\vdots &&&& qt & x_2 & x_3 & \cdots & x_{(n-1)/2} & -sx_{(n-3)/2}/q \\
\vdots &&& qt & rt & y_2 & y_3 & \cdots & y_{(n-1)/2} & -sy_{(n-3)/2}/q \\
\vdots && qt & x_2 & y_2 & 0 &&&& \vdots \\
\vdots & \rotatebox[origin=c]{300}{\vdots} & 0 & x_3 & y_3 &&&&& \vdots \\
\vdots && \vdots & \vdots & \vdots &&&&& \vdots \\
0 & \cdots & 0 & x_{(n-1)/2} & y_{(n-1)/2} &&&&& \vdots \\
0 & \cdots & 0 & -sx_{(n-3)/2}/q & -sy_{(n-3)/2}/q & \cdots& \cdots & \cdots & \cdots & 0
\end{array} 
\right].
$}
\]
Then, since $W(t)_{n_1+1}={}^tS(t)_{n_1+1}W(t)_{n_1}S(t)_{n_1+1}$, where
\begin{align*}
S(t)_{n_1+1}=\prod_{m=(n+3)/2}^{n}R_n\left(\frac{n+1}{2},m;-\dfrac{w_{(n-1)/2, m}(t)_{n_1}}{qt}\right) 
\end{align*}
and
$w_{n-1, (n+1)/2}(t)_{n_1+1}=w_{(n+1)/2, n-1}(t)_{n_1+1}=x_{(n+1)/2}$,
we have
\begin{small}
\begin{align*}
w_{n-1, n-1}(t)_{n_1+1} & =-\frac{x_{(n-1)/2}}{qt}\cdot y_{(n-1)/2}-\frac{x_{(n-1)/2}}{qt}\cdot x_{(n+1)/2} \\[1mm]
&=-\frac{x_{(n-1)/2}}{qt}\cdot \left(-\frac{s}{q}x_{(n-3)/2} \right)-\frac{x_{(n-1)/2}}{qt}\cdot x_{(n+1)/2} \\[1mm]
&=-\frac{1}{qt} \cdot \frac{(-1)^{(n-1)/2}\left\{ \left( r+\sqrt{r^2-4qs} \right)^{(n-1)/2}-\left( r-\sqrt{r^2-4qs} \right)^{(n-1)/2} \right\}}{2^{(n-1)/2}q^{(n-5)/2}\sqrt{r^2-4qs}}t \\[2mm]
& \hspace{7mm} \times \left( -\frac{s}{q} \right)\cdot \frac{(-1)^{(n-3)/2}\left\{ \left( r+\sqrt{r^2-4qs} \right)^{(n-3)/2}-\left( r-\sqrt{r^2-4qs} \right)^{(n-3)/2} \right\}}{2^{(n-3)/2}q^{(n-7)/2}\sqrt{r^2-4qs}}t \\[2mm]
& \hspace{3mm} -\frac{1}{qt} \cdot \frac{(-1)^{(n-1)/2}\left\{ \left( r+\sqrt{r^2-4qs} \right)^{(n-1)/2}-\left( r-\sqrt{r^2-4qs} \right)^{(n-1)/2} \right\}}{2^{(n-1)/2}q^{(n-5)/2}\sqrt{r^2-4qs}}t \\[2mm]
& \hspace{7mm} \times \frac{(-1)^{(n+1)/2}\left\{ \left( r+\sqrt{r^2-4qs} \right)^{(n+1)/2}-\left( r-\sqrt{r^2-4qs} \right)^{(n+1)/2} \right\}}{2^{(n+1)/2}q^{(n-3)/2}\sqrt{r^2-4qs}}t \\[2mm]
&=-s\biggl\{\left( r+\sqrt{r^2-4qs} \right)^{n-2}-\left(4qs\right)^{(n-3)/2}\left(r+\sqrt{r^2-4qs}\right) \\[1mm]
&\hspace{7mm}-\left(4qs\right)^{(n-3)/2}\left(r-\sqrt{r^2-4qs}\right)+\left( r-\sqrt{r^2-4qs} \right)^{n-2}\biggr\}t \biggl/\left\{2^{n-2}q^{n-4}(r^2-4qs)\right\} \\[1mm]
&\hspace{3.5mm}+\biggl\{\left( r+\sqrt{r^2-4qs} \right)^{n}-\left(4qs\right)^{(n-1)/2}\left(r+\sqrt{r^2-4qs}\right) \\[1mm]
&\hspace{7mm}-\left(4qs\right)^{(n-1)/2}\left(r-\sqrt{r^2-4qs}\right)+\left( r-\sqrt{r^2-4qs} \right)^{n}\biggr\}t \biggl/\left\{2^{n}q^{n-3}(r^2-4qs)\right\} \\[1mm]
\end{align*}
\begin{align*}
&=\Biggl\{\left( r+\sqrt{r^2-4qs} \right)^{n-2}\left\{\left( r+\sqrt{r^2-4qs} \right)^{2}-4qs\right\} \\[1mm]
&\hspace{7mm}+\left( r-\sqrt{r^2-4qs} \right)^{n-2}\left\{\left( r-\sqrt{r^2-4qs} \right)^{2}-4qs\right\}\Biggr\}t\biggl/\left\{2^{n}q^{n-3}(r^2-4qs)\right\} \\[2mm]
&=\frac{2\sqrt{r^2-4qs}\left\{ \left( r+\sqrt{r^2-4qs} \right)^{n-1}-\left( r-\sqrt{r^2-4qs} \right)^{n-1} \right\}}{2^{n}q^{n-3}(r^2-4qs)}t \\[2mm]
&=\frac{\left\{ \left( r+\sqrt{r^2-4qs} \right)^{n-1}-\left( r-\sqrt{r^2-4qs} \right)^{n-1} \right\}}{2^{n-1}q^{n-3}\sqrt{r^2-4qs}}t =x_{n-1}.
\end{align*}
\end{small}
Similarly, we have
\begin{align*}
w_{n-1, n}(t)_{n_1+1} &=w_{n, n-1}(t)_{n_1+1}=-\frac{x_{(n-1)/2}}{qt}\cdot\frac{-sy_{(n-3)/2}}{q}-\frac{-sx_{(n-3)/2}}{q^2t} x_{(n+1)/2} \\[1mm]
&=-\frac{s^2}{q^3t}\cdot\frac{(-1)^{(n-1)/2}\left\{ \left(r+\sqrt{r^2-4qs}\right)^{(n-1)/2}-\left(r-\sqrt{r^2-4qs}\right)^{(n-1)/2} \right\}}{2^{(n-1)/2}q^{(n-5)/2}\sqrt{r^2-4qs}}t \\[1mm]
&\hspace{10mm} \times \frac{(-1)^{(n-5)/2}\left\{ \left(r+\sqrt{r^2-4qs}\right)^{(n-5)/2}-\left(r-\sqrt{r^2-4qs}\right)^{(n-5)/2}  \right\}}{2^{(n-5)/2}q^{(n-9)/2}\sqrt{r^2-4qs}}t \\[1mm]
&\hspace{3mm} +\frac{s}{q^2t}\cdot \frac{(-1)^{(n-3)/2}\left\{ \left(r+\sqrt{r^2-4qs}\right)^{(n-3)/2}-\left(r-\sqrt{r^2-4qs}\right)^{(n-3)/2} \right\}}{2^{(n-3)/2}q^{(n-7)/2}\sqrt{r^2-4qs}}t \\[1mm]
&\hspace{10mm} \times \frac{(-1)^{(n+1)/2}\left\{ \left(r+\sqrt{r^2-4qs}\right)^{(n+1)/2}-\left(r-\sqrt{r^2-4qs}\right)^{(n+1)/2}  \right\}}{2^{(n+1)/2}q^{(n-3)/2}\sqrt{r^2-4qs}}t \\[1mm]
&=-\frac{s^2}{q}\cdot\biggl\{(r+\sqrt{r^2-4qs})^{n-3}-(4qs)^{(n-5)/2}(r+\sqrt{r^2-4qs})^2 \\
&\hspace{7mm}-(4qs)^{(n-5)/2}(r-\sqrt{r^2-4qs})^2+(r-\sqrt{r^2-4qs})^{n-3}\biggr\}t\biggl/\left\{2^{n-3}q^{n-5}(r^2-4qs)\right\} \\
&\hspace{3mm}+\frac{s}{q}\cdot\biggl\{(r+\sqrt{r^2-4qs})^{n-1}-(4qs)^{(n-3)/2}(r+\sqrt{r^2-4qs})^2 \\
&\hspace{7mm}-(4qs)^{(n-3)/2}(r-\sqrt{r^2-4qs})^2+(r-\sqrt{r^2-4qs})^{n-1}\biggr\}t\biggl/\left\{2^{n-1}q^{n-4}(r^2-4qs)\right\} \\
&=\frac{s}{q}\cdot\frac{(r+\sqrt{r^2-4qs})^{n-2}-(r-\sqrt{r^2-4qs})^{n-2}}{2^{n-2}q^{n-4}\sqrt{r^2-4qs}}t \\
&=-\frac{s}{q}x_{n-2}=y_{n-1}.
\end{align*}
Moreover, since $w_{n, (n+1)/2}(t)_{n_1+1}=w_{(n+1)/2,n}(t)_{n_1+1}=-sx_{(n-1)/2}/q$, we have
\begin{align*}
w_{n,n}(t)_{n_1+1} &=-\frac{-sx_{(n-3)/2}}{q^2t}\cdot\frac{-sy_{(n-3)/2}}{q}-\frac{-sx_{(n-3)/2}}{q^2t}\cdot\frac{-sx_{(n-1)/2}}{q} \\[1mm]
&=\frac{s^3}{q^4t}\cdot\frac{(-1)^{(n-3)/2}\left\{ \left(r+\sqrt{r^2-4qs}\right)^{(n-3)/2}-\left(r-\sqrt{r^2-4qs}\right)^{(n-3)/2} \right\}}{2^{(n-3)/2}q^{(n-7)/2}\sqrt{r^2-4qs}}t \\[1mm]
&\hspace{21mm} \times \frac{(-1)^{(n-5)/2}\left\{ \left(r+\sqrt{r^2-4qs}\right)^{(n-5)/2}-\left(r-\sqrt{r^2-4qs}\right)^{(n-5)/2}  \right\}}{2^{(n-5)/2}q^{(n-9)/2}\sqrt{r^2-4qs}}t \\[1mm]
&\hspace{3mm} -\frac{s^2}{q^3t}\cdot \frac{(-1)^{(n-3)/2}\left\{ \left(r+\sqrt{r^2-4qs}\right)^{(n-3)/2}-\left(r-\sqrt{r^2-4qs}\right)^{(n-3)/2} \right\}}{2^{(n-3)/2}q^{(n-7)/2}\sqrt{r^2-4qs}}t \\[1mm]
&\hspace{21mm} \times \frac{(-1)^{(n-1)/2}\left\{ \left(r+\sqrt{r^2-4qs}\right)^{(n-1)/2}-\left(r-\sqrt{r^2-4qs}\right)^{(n-1)/2}  \right\}}{2^{(n-1)/2}q^{(n-5)/2}\sqrt{r^2-4qs}}t \\
& =-\frac{s^3}{q^2}\cdot\biggl\{(r+\sqrt{r^2-4sq})^{n-4}-(4qs)^{(n-5)/2}(r+\sqrt{r^2-4qs}) \\
&\hspace{7mm}-(4qs)^{(n-5)/2}(r-\sqrt{r^2-4qs})+(r-\sqrt{r^2-4qs})^{n-4}\biggr\}t\biggl/\left\{2^{n-4}q^{n-6}(r^2-4qs)\right\} \\
&\hspace{3mm}+\frac{s^2}{q^2}\cdot\biggl\{(r+\sqrt{r^2-4sq})^{n-2}-(4qs)^{(n-3)/2}(r+\sqrt{r^2-4qs}) \\
&\hspace{7mm}-(4qs)^{(n-3)/2}(r-\sqrt{r^2-4qs})+(r-\sqrt{r^2-4qs})^{n-2}\biggr\}t\biggl/\left\{2^{n-2}q^{n-5}(r^2-4qs)\right\} \\
&=\frac{s^2}{q^2}\cdot\frac{(r+\sqrt{r^2-4sq})^{n-3}-(r-\sqrt{r^2-4qs})^{n-3}}{2^{n-3}q^{n-5}\sqrt{r^2-4qs}}t \\[2mm]
&=-\frac{s}{q}\left(-\frac{s}{q}x_{n-3}\right)=-\frac{s}{q}y_{n-2}.
\end{align*}
\normalsize
Therefore, we can express the matrix $W(t)_{n_0}$ $(=W(t)_{n_1+1})$ as follows;
\begin{align*}
W(t)_{n_0}=
\scalebox{1}[1]
{$\left[
\begin{array}{ccccccccccc}
0 & \cdots & \cdots & \cdots & \cdots & \cdots & \cdots & \cdots & 0 & 0 \\
\vdots &&&&&& \rotatebox[origin=c]{300}{\vdots} && \vdots & \vdots \\ 
\vdots &&&&& qt & 0 & \cdots & 0 & 0 \\
\vdots &&&& qt & 0 & 0 & \cdots & 0 & 0 \\
\vdots &&& qt & * & * & * & \cdots & * & * \\
\vdots && qt & 0 & * & * & * & \cdots & * & * \\
\vdots & \rotatebox[origin=c]{300}{\vdots} & 0 & 0 & * & * & * & \cdots & * & * \\
\vdots && \vdots & \vdots & \vdots & \vdots & \vdots & \ddots & \vdots & \vdots \\
0 & \cdots & 0 & 0 & * & * & * & \cdots & x_{n-1} & y_{n-1} \\
0 & \cdots & 0 & 0 & * & * & * & \cdots & y_{n-1} & (-s/q)y_{n-2}
\end{array} 
\right].
$}
\end{align*}
Then, by the definition of the matrix $W_{k}$ $(n_0< k \leq n-2)$, we have
\begin{align*}
W(t)_{n-2}=
\scalebox{1}[1]
{$\left[
\begin{array}{cccccccccccccc}
0 & \cdots & \cdots & \cdots & \cdots & \cdots & \cdots & 0 & 0 \\
\vdots &&&&& \rotatebox[origin=c]{310}{\vdots} && \vdots & \vdots \\ 
\vdots &&&& qt & 0 & \cdots & 0 & 0 \\
\vdots &&& qt & 0 & 0 & \cdots & 0 & 0 \\
\vdots && qt & 0 & 0 & 0 & \cdots & 0 & 0 \\
\vdots & \rotatebox[origin=c]{310}{\vdots} & 0 & 0 & 0 & 0 & \cdots & 0 & 0 \\
\vdots && \vdots & \vdots & \vdots & \vdots & \ddots & \vdots & \vdots \\
0 && 0 & 0 & 0 & 0 & \cdots & x_{n-1} & y_{n-1} \\[1.5mm]
0 & \cdots & 0 & 0 & 0 & 0 & \cdots & y_{n-1} & (-s/q)y_{n-2} \\
\end{array} 
\right],
$}
\end{align*}
which completes the proof of  \cref{lem6.4} for odd $n$.

Next, suppose $n$ is even. Then, we have $n_1=(n-4)/2$ and hence by  \cref{lem6.2}, we have
\begin{align*}
w_{ij}(t)_{n_1}
=
\begin{cases}
0 & \text{$(i,j)=((n-4)/2, \ell)$ or $(\ell, (n-4)/2)$ $((n+6)/2\leq \ell \leq n)$}, \\
x_{\ell-n/2} & \text{$(i,j)=((n-2)/2, \ell)$ or $(\ell, (n-2)/2)$ $((n+4)/2 \leq \ell \leq n-1)$}, \\
y_{\ell-n/2} & \text{$(i,j)=(n/2, \ell)$ or $(\ell, n/2)$ $((n+4)/2 \leq \ell \leq n-1)$}, \\
-sx_{(n-4)/2}/q & \text{$(i,j)=((n-2)/2, n)$ or $(n, (n-2)/2)$}, \\
-sy_{(n-4)/2}/q & \text{$(i,j)=(n/2, n)$ or $(n, n/2)$}, \\
w_{ij}(t)_{(n-6)/2} & \text{otherwise}.
\end{cases} 
\end{align*} 
\normalsize
Thus, we have the next expression of the matrix $W(t)_{n_1}$;
\begin{align*}
W(t)_{n_1}
=
\scalebox{0.8}[1]
{$\left[
\begin{array}{cccccccccccc}
0 & \cdots & \cdots & \cdots & \cdots & \cdots & \cdots & \cdots & \cdots & 0 & 0 \\
\vdots &&&&&&& \rotatebox[origin=c]{300}{\vdots} && \vdots & \vdots \\ 
\vdots &&&&&& qt & 0 & \cdots & 0 & 0 \\
\vdots &&&&& qt & x_2 & x_3 & \cdots & x_{(n-2)/2} & -sx_{(n-4)/2}/q \\
\vdots &&&& qt & rt & y_2 & y_3 & \cdots & y_{(n-2)/2} & -sy_{(n-4)/2}/q \\
\vdots &&& qt & rt & st & 0 & 0 & \cdots & 0 & 0 \\
\vdots && qt & x_2 & y_2 & 0 & 0 &&&& \vdots \\
\vdots & \rotatebox[origin=c]{300}{\vdots} & 0 & x_3 & y_3 & 0 &&&&& \vdots \\
\vdots && \vdots & \vdots & \vdots & \vdots &&&&& \vdots \\
0 & \cdots & 0 & x_{(n-2)/2} & y_{(n-2)/2} & 0 &&&&& \vdots \\
0 & \cdots & 0 & -sx_{(n-4)/2}/q & -sy_{(n-4)/2}/q & 0 & \cdots& \cdots & \cdots & \cdots & 0
\end{array} 
\right].
$}
\end{align*}

Then, for any integer $\ell$ $((n+4)/2 \leq \ell \leq n)$, let us put 
\begin{align*}
V(t)_{\ell}=(v_{ij}(t)_{\ell})_{1\leq i,j \leq n}={}^tR(t)_{\ell}W(t)_{n_1}R(t)_{\ell},
\end{align*}
where
\begin{align*}
R(t)_{\ell}=\prod_{m=(n+4)/2}^{\ell}R_n\left((n+2)/2,m;-\dfrac{w_{(n-2)/2, m}(t)_{n_1}}{qt}\right).
\end{align*}
Note that we have $V(t)_n=W(t)_{n_1+1}$.

\begin{clm}
For any integer $\ell^{\prime}$ $((n+4)/2 \leq \ell^{\prime} \leq n-1)$, we have  
\begin{align*}
v_{ij}(t)_{\ell^{\prime}}
=
\begin{cases}
0 & \text{$(i,j)=((n-2)/2,\ell)$ or $(\ell,(n-2)/2)$ $((n+4)/2 \leq \ell \leq \ell^{\prime})$}, \\
x_{\ell-(n-2)/2} & \text{$(i,j)=(n/2, \ell)$ or $(\ell, n/2)$ $((n+2)/2 \leq \ell \leq \ell^{\prime})$}, \\
y_{\ell-(n-2)/2} & \text{$(i,j)=((n+2)/2, \ell)$ or $(\ell, (n+2)/2)$ $((n+2)/2 \leq \ell \leq \ell^{\prime})$}, \\
\left( -x_{\ell-n/2}/qt \right)^2y_2 & \text{$(i,j)=(\ell, \ell)$ $((n+4)/2 \leq \ell \leq \ell^{\prime})$}, \\
(-x_{m-n/2}/qt) \\
\hspace{6.5mm}\times y_{\ell-(n-2)/2} & \text{$(i,j)=(\ell,m)$ or $(m, \ell)$ $((n+4)/2 \leq \ell<m \leq \ell^{\prime})$}, \\
w_{ij}(t)_{(n-4)/2} & \text{otherwise}.
\end{cases}
\end{align*}
\normalsize
\end{clm}

\begin{proof}
To ease notation, let us put $k=(n-2)/2$. Then, by definition, 
\normalsize
\begin{align*}
V(t)_{(n+4)/2}= 
\scalebox{0.75}[0.95]
{$\left[
\begin{array}{cccccccccccc}
0 & \cdots & \cdots & \cdots & \cdots & \cdots & \cdots & \cdots & \cdots & 0 & 0 \\
\vdots &&&&&&& \rotatebox[origin=c]{300}{\vdots} && \vdots & \vdots \\ 
\vdots &&&&&& qt & 0 & \cdots & 0 & 0 \\
\vdots &&&&& qt & 0 & x_3 & \cdots & x_{k} & -sx_{k-1}/q \\
\vdots &&&& qt & x_2 & x_3 & y_3 & \cdots & y_{k} & -sy_{k-1}/q \\
\vdots &&& qt & x_2 & y_2 & y_3 & 0 & \cdots & 0 & 0 \\
\vdots && qt & 0 & x_3 & y_3 & \left( -x_2/qt \right)^2y_2 &&&& \vdots \\
\vdots & \rotatebox[origin=c]{300}{\vdots} & 0 & x_3 & y_3 & 0 &&&&& \vdots \\
\vdots & & \vdots & \vdots & \vdots & \vdots &&&&& \vdots \\
0 & \cdots & 0 & x_{k} & y_{k} & 0 &&&&& \vdots \\
0 & \cdots & 0 & -sx_{k-1}/q & -sy_{k-1}/q & 0 & \cdots& \cdots & \cdots & \cdots & 0
\end{array} 
\right]
$}
\end{align*}
\normalsize
and we get Claim for $V(t)_{(n+4)/2}$. Here, suppose Claim is true for $V(t)_{\ell^{\prime}-1}$ and hence
\begin{align*}
V(t)_{\ell^{\prime}-1}= 
\scalebox{0.63}[0.8]
{$\left[
\begin{array}{cccccccccccccc}
0 & \cdots & \cdots & \cdots & \cdots & \cdots & \cdots & \cdots & 0 & 0 & \cdots \\
\vdots &&&&& \rotatebox[origin=c]{300}{\vdots} &&& \vdots & \vdots & \\ 
\vdots &&&& qt & 0 & 0 & \cdots & 0 & x_{\ell^{\prime}-k-1} & \cdots \\
\vdots &&& qt & x_2 & x_3 & x_4 & \cdots & x_{\ell^{\prime}-k-1} & y_{\ell^{\prime}-k-1} & \cdots \\
\vdots && qt & x_2 & y_2 & y_3 & y_4 & \cdots & y_{\ell^{\prime}-k-1} & 0 & \cdots \\
\vdots & \rotatebox[origin=c]{300}{\vdots} & 0 & x_3 & y_3 & \left( -x_2/qt \right)^2y_2 & \left( -x_3/qt \right)y_3 & \cdots & (-x_{\ell^{\prime}-k-2}/qt)y_3 & 0 & \cdots \\
\vdots && 0 & x_4 & y_4 & \left( -x_3/qt \right)y_3 & \left( -x_3/qt \right)^2y_2 & \cdots & (-x_{\ell^{\prime}-k-2}/qt)y_4 & 0 & \cdots \\
\vdots && \vdots & \vdots & \vdots & \vdots & \vdots & \ddots & \vdots & \vdots & \\
0 && 0 & x_{\ell^{\prime}-k-1} & y_{\ell^{\prime}-k-1} & (-x_{\ell^{\prime}-k-2}/qt)y_3 & (-x_{\ell^{\prime}-k-2}/qt)y_4 & \cdots & (-x_{\ell^{\prime}-k-2}/qt)^2y_2 & 0 & \cdots \\
0 & \cdots & x_{\ell^{\prime}-k-1} & y_{\ell^{\prime}-k-1} & 0 & 0 & 0 & \cdots & 0 & 0 & \cdots \\
\vdots && \vdots & \vdots & \vdots & \vdots & \vdots && \vdots & \vdots  
\end{array} 
\right].
$}
\end{align*}
Thus, by a direct computation, we have
\begin{align*}
V(t)_{\ell^{\prime}}=
\scalebox{0.7}[0.9]
{$\left[
\begin{array}{cccccccccccccc}
0 & \cdots & \cdots & \cdots & \cdots & \cdots & \cdots & \cdots & 0 & 0 & \cdots \\
\vdots &&&&& \rotatebox[origin=c]{300}{\vdots} &&& \vdots & \vdots & \\ 
\vdots &&&& qt & 0 & 0 & \cdots & 0 & x_{\ell^{\prime}-k} & \cdots \\
\vdots &&& qt & x_2 & x_3 & x_4 & \cdots & x_{\ell^{\prime}-k} & y_{\ell^{\prime}-k} & \cdots \\
\vdots && qt & x_2 & y_2 & y_3 & y_4 & \cdots & y_{\ell^{\prime}-k} & 0 & \cdots \\
\vdots & \rotatebox[origin=c]{300}{\vdots} & 0 & x_3 & y_3 & \left( -x_2/qt \right)^2y_2 & \left( -x_3/qt \right)y_3 & \cdots & (-x_{\ell^{\prime}-k-1}/qt)y_3 & 0 & \cdots \\
\vdots && 0 & x_4 & y_4 & \left( -x_3/qt \right)y_3 & \left( -x_3/qt \right)^2y_2 & \cdots & (-x_{\ell^{\prime}-k-1}/qt)y_4 & 0 & \cdots \\
\vdots && \vdots & \vdots & \vdots & \vdots & \vdots & \ddots & \vdots & \vdots & \\
0 && 0 & x_{\ell^{\prime}-k} & y_{\ell^{\prime}-k} & (-x_{\ell^{\prime}-k-1}/qt)y_3 & (-x_{\ell^{\prime}-k-1}/qt)y_4 & \cdots & (-x_{\ell^{\prime}-k-1}/qt)^2y_2 & 0 & \cdots \\
0 & \cdots & x_{\ell^{\prime}-k} & y_{\ell^{\prime}-k} & 0 & 0 & 0 & \cdots & 0 & 0 & \cdots \\
\vdots && \vdots & \vdots & \vdots & \vdots & \vdots && \vdots & \vdots  
\end{array} 
\right]
$}
\end{align*}
\normalsize
and we get Claim by induction on $\ell^{\prime}$.
\end{proof}

By above Claim, 
\begin{align*}
V(t)_{n-1}= 
\scalebox{0.7}[0.7]
{$\left[
\begin{array}{cccccccccccccc}
0 & \cdots & \cdots & \cdots & \cdots & \cdots & \cdots & \cdots & 0 & 0 \\
\vdots &&&&& \rotatebox[origin=c]{300}{\vdots} &&& \vdots & \vdots \\ 
\vdots &&&& qt & 0 & 0 & \cdots & 0 & -sx_{k-1}/q \\
\vdots &&& qt & x_2 & x_3 & x_4 & \cdots & x_{k+1} & -sy_{k-1}/q \\
\vdots && qt & x_2 & y_2 & y_3 & y_4 & \cdots & y_{k+1} & 0 \\
\vdots & \rotatebox[origin=c]{300}{\vdots} & 0 & x_3 & y_3 & \left( -x_2/qt \right)^2y_2 & \left( -x_3/qt \right)y_3 & \cdots & (-x_{k}/qt)y_3 & 0 \\
\vdots && 0 & x_4 & y_4 & \left( -x_3/qt \right)y_3 & \left( -x_3/qt \right)^2y_2 & \cdots & (-x_{k}/qt)y_4 & 0 \\
\vdots && \vdots & \vdots & \vdots & \vdots & \vdots & \ddots & \vdots & \vdots \\
0 && 0 & x_{k+1} & y_{k+1} & (-x_{k}/qt)y_3 & (-x_{k}/qt)y_4 & \cdots & (-x_{k}/qt)^2y_2 & 0 \\
0 & \cdots & -sx_{k-1}/q & -sy_{k-1}/q & 0 & 0 & 0 & \cdots & 0 & 0 \\
\end{array} 
\right]
$}
\end{align*}
\normalsize
and hence, by the definition of $V(t)_n$, we have
\begin{align*}
&W(t)_{n_1+1}=V(t)_{n}= \\
&\scalebox{0.7}[0.7]
{$\left[
\begin{array}{cccccccccccccc}
0 & \cdots & \cdots & \cdots & \cdots & \cdots & \cdots & \cdots & 0 & 0 \\
\vdots &&&&& \rotatebox[origin=c]{300}{\vdots} &&& \vdots & \vdots \\ 
\vdots &&&& qt & 0 & 0 & \cdots & 0 & 0 \\
\vdots &&& qt & x_2 & x_3 & x_4 & \cdots & x_{k+1} & -sx_{k}/q \\
\vdots && qt & x_2 & y_2 & y_3 & y_4 & \cdots & y_{k+1} & -sy_{k}/q \\
\vdots & \rotatebox[origin=c]{300}{\vdots} & 0 & x_3 & y_3 & \left( -x_2/qt \right)^2y_2 & \left( -x_3/qt \right)y_3 & \cdots & (-x_{k}/qt)y_3 & (x_{k-1}/q^2t^2)y_2y_3 \\
\vdots && 0 & x_4 & y_4 & \left( -x_3/qt \right)y_3 & \left( -x_3/qt \right)^2y_2 & \cdots & (-x_{k}/qt)y_4 & (x_{k-1}/q^2t^2)y_2y_4 \\
\vdots && \vdots & \vdots & \vdots & \vdots & \vdots & \ddots & \vdots & \vdots \\
0 && 0 & x_{k+1} & y_{k+1} & (-x_{k}/qt)y_3 & (-x_{k}/qt)y_4 & \cdots & (-x_{k}/qt)^2y_2 & (x_{k-1}/q^2t^2)y_2y_{k+1} \\[1.5mm]
0 & \cdots & 0 & -sx_{k}/q & -sy_{k}/q & (x_{k-1}/q^2t^2)y_2y_3 & (x_{k-1}/q^2t^2)y_2y_4 & \cdots & (x_{k-1}/q^2t^2)y_2y_{k+1} & (x_{k-1}/q^2t^2)^2y_2^3 \\
\end{array} 
\right].
$}
\end{align*}
Therefore, by the definition of the matrix $W(t)_{n_1+2}=W(t)_{n_0}$, we have
\begin{small}
\begin{align*}
w_{n-1,n-1}(t)_{n_0} &=\left(-\frac{x_{(n-2)/2}}{qt}\right)^2y_2-\frac{\left(x_{n/2}\right)^2}{qt} \\[2mm]
&=\frac{s\left\{\left(r+\sqrt{r^2-4qs}\right)^{n-2}+\left(r-\sqrt{r^2-4qs}\right)^{n-2}-2(4qs)^{(n-2)/2}\right\}}{2^{n-2}q^{n-4}\left(r^2-4qs\right)}t \\
&\hspace{45mm}-\frac{\left(r+\sqrt{r^2-4qs}\right)^{n}+\left(r-\sqrt{r^2-4qs}\right)^{n}-2(4qs)^{n/2}}{2^nq^{n-3}\left(r^2-4qs\right)}t \\
&=\frac{\left(r+\sqrt{r^2-4qs}\right)^{n-2}\left\{ 4qs-\left(r+\sqrt{r^2-4qs}\right)^2\right\}}{2^nq^{n-3}\left(r^2-4qs\right)}t \\
&\hspace{45mm}+\frac{\left(r-\sqrt{r^2-4qs}\right)^{n-2}\left\{ 4qs-\left(r-\sqrt{r^2-4qs}\right)^2 \right\}}{2^nq^{n-3}\left(r^2-4qs\right)}t \\
&=\frac{\left(r+\sqrt{r^2-4qs}\right)^{n-2}\left(-2\sqrt{r^2-4qs}\right)\left(r+\sqrt{r^2-4qs}\right)}{2^nq^{n-3}\left(r^2-4qs\right)}t \\
&\hspace{40mm}+\frac{\left(r-\sqrt{r^2-4qs}\right)^{n-2}\left(2\sqrt{r^2-4qs}\right)\left(r-\sqrt{r^2-4qs}\right)}{2^nq^{n-3}\left(r^2-4qs\right)}t =x_{n-1}. 
\end{align*}
\end{small}
Similarly, we have
\begin{align*}
&w_{n-1,n}(t)_{n_0}=w_{n,n-1}(t)_{n_0}=\frac{x_{(n-4)/2}}{q^2t^2}y_2y_{n/2}+\frac{sx_{(n-2)/2}x_{n/2}}{q^2t} \\
&=\frac{x_{(n-4)/2}}{q^2t^2}y_2\left(-\frac{s}{q}x_{(n-2)/2}\right)+\frac{sx_{(n-2)/2}x_{n/2}}{q^2t} \\
&=-\frac{s^2}{q^3}\cdot\frac{-\left\{\left(r+\sqrt{r^2-4qs}\right)^{n-3}+\left(r-\sqrt{r^2-4qs}\right)^{n-3}-2r\left( 4qs \right)^{(n-4)/2}\right\}}{2^{n-3}q^{n-7}\left(r^2-4qs\right)}t \\
&\hspace{17mm}+\frac{s}{q^2}\cdot\frac{-\left\{\left(r+\sqrt{r^2-4qs}\right)^{n-1}+\left(r-\sqrt{r^2-4qs}\right)^{n-1}-2r\left( 4qs \right)^{(n-2)/2}\right\}}{2^{n-1}q^{n-5}\left(r^2-4qs\right)}t \\
&=\frac{s\left\{4qs\left(r+\sqrt{r^2-4qs}\right)^{n-3}+4qs\left(r-\sqrt{r^2-4qs}\right)^{n-3}-2r\left( 4qs \right)^{(n-2)/2}\right\}}{2^{n-1}q^{n-3}\left(r^2-4qs\right)}t \\
&\hspace{22mm}-\frac{s\left\{\left(r+\sqrt{r^2-4qs}\right)^{n-1}+\left(r-\sqrt{r^2-4qs}\right)^{n-1}-2r\left( 4qs \right)^{(n-2)/2}\right\}}{2^{n-1}q^{n-3}\left(r^2-4qs\right)}t \\
&=\frac{s\left(r+\sqrt{r^2-4qs}\right)^{n-3}\left(-2\sqrt{r^2-4qs}\right)\left(r+\sqrt{r^2-4qs}\right)}{2^{n-1}q^{n-3}\left(r^2-4qs\right)}t \\
&\hspace{25mm}+\frac{s\left(r-\sqrt{r^2-4qs}\right)^{n-3}\left(2\sqrt{r^2-4qs}\right)\left(r-\sqrt{r^2-4qs}\right)}{2^{n-1}q^{n-3}\left(r^2-4qs\right)}t \\
&=-\frac{s}{q}\cdot\frac{\left(r+\sqrt{r^2-4qs}\right)^{n-2}-\left(r-\sqrt{r^2-4qs}\right)^{n-2}}{2^{n-2}q^{n-4}\sqrt{r^2-4qs}}t=y_{n-1}, 
\end{align*}

\begin{small}
\begin{align*}
w_{n,n}(t)_{n_0} &=\left(\frac{x_{(n-4)/2}}{q^2t^2}\right)^2y_2^3-\frac{s^2\left(x_{(n-2)/2}\right)^2}{q^3t} \\
&=\frac{s^3\left\{\left(r+\sqrt{r^2-4qs}\right)^{n-4}+\left(r-\sqrt{r^2-4qs}\right)^{n-4}-2(4qs)^{(n-4)/2}\right\}}{2^{n-4}q^{n-4}(r^2-4qs)}t \\
&\hspace{30mm}-\frac{s^2\left\{\left(r+\sqrt{r^2-4qs}\right)^{n-2}+\left(r-\sqrt{r^2-4qs}\right)^{n-2}-2(4qs)^{(n-2)/2}\right\}}{2^{n-2}q^{n-3}(r^2-4qs)}t \\
&=\frac{s^2\left\{4qs\left(r+\sqrt{r^2-4qs}\right)^{n-4}+4qs\left(r-\sqrt{r^2-4qs}\right)^{n-4}-2(4qs)^{(n-2)/2}\right\}}{2^{n-2}q^{n-3}(r^2-4qs)}t \\
&\hspace{30mm}-\frac{s^2\left\{\left(r+\sqrt{r^2-4qs}\right)^{n-2}+\left(r-\sqrt{r^2-4qs}\right)^{n-2}-2(4qs)^{(n-2)/2}\right\}}{2^{n-2}q^{n-3}(r^2-4qs)}t \\
&=\frac{s^2\left(r+\sqrt{r^2-4qs}\right)^{n-4}\left(-2\sqrt{r^2-4qs}\right)\left(r+\sqrt{r^2-4qs}\right)}{2^{n-2}q^{n-3}\left(r^2-4qs\right)}t \\
&\hspace{25mm}+\frac{s^2\left(r-\sqrt{r^2-4qs}\right)^{n-4}\left(2\sqrt{r^2-4qs}\right)\left(r-\sqrt{r^2-4qs}\right)}{2^{n-2}q^{n-3}\left(r^2-4qs\right)}t \\
&=\left(-\frac{s}{q} \right)\cdot\left(-\frac{s}{q} \right) \cdot \frac{-\left\{\left(r+\sqrt{r^2-4qs}\right)^{n-3}-\left(r-\sqrt{r^2-4qs}\right)^{n-3}\right\}}{2^{n-3}q^{n-5}\sqrt{r^2-4qs}}t =-\frac{s}{q}y_{n-2}.
\end{align*}

\end{small}

Therefore, we can express the matrix $W(t)_{n_0}$ as follows;
\begin{align*}
W(t)_{n_0}=
\scalebox{0.75}[0.75]
{$\left[
\begin{array}{cccccccccccccc}
0 & \cdots & \cdots & \cdots & \cdots & \cdots & \cdots & 0 & 0 \\
\vdots &&&&& \rotatebox[origin=c]{305}{\vdots} && \vdots & \vdots \\ 
\vdots &&&& qt & 0 & \cdots & 0 & 0 \\
\vdots &&& qt & 0 & 0 & \cdots & 0 & 0 \\
\vdots && qt & 0 & * & * & \cdots & * & * \\
\vdots & \rotatebox[origin=c]{305}{\vdots} & 0 & 0 & * & * & \cdots & * & * \\
\vdots && \vdots & \vdots & \vdots & \vdots & \ddots & \vdots & \vdots \\
0 && 0 & 0 & * & * & \cdots & x_{n-1} & y_{n-1} \\[1.5mm]
0 & \cdots & 0 & 0 & * & * & \cdots & y_{n-1} & (-s/q)y_{n-2} \\
\end{array} 
\right].
$}
\end{align*}
Then, by the definition of the matrix $W_{k}$ $(n_0 < k \leq n-2)$, we have
\begin{align*}
W(t)_{n-2}=
\scalebox{0.75}[0.75]
{$\left[
\begin{array}{cccccccccccccc}
0 & \cdots & \cdots & \cdots & \cdots & \cdots & \cdots & 0 & 0 \\
\vdots &&&&& \rotatebox[origin=c]{310}{\vdots} && \vdots & \vdots \\ 
\vdots &&&& qt & 0 & \cdots & 0 & 0 \\
\vdots &&& qt & 0 & 0 & \cdots & 0 & 0 \\
\vdots && qt & 0 & 0 & 0 & \cdots & 0 & 0 \\
\vdots & \rotatebox[origin=c]{310}{\vdots} & 0 & 0 & 0 & 0 & \cdots & 0 & 0 \\
\vdots && \vdots & \vdots & \vdots & \vdots & \ddots & \vdots & \vdots \\
0 && 0 & 0 & 0 & 0 & \cdots & x_{n-1} & y_{n-1} \\[1.5mm]
0 & \cdots & 0 & 0 & 0 & 0 & \cdots & y_{n-1} & (-s/q)y_{n-2} \\
\end{array} 
\right],
$}
\end{align*}
which completes the proof of  \cref{lem6.4} for even $n$.
\end{proof}
In the end, let us apply  \cref{lem6.4} to the matrix $A_{\bm c}(t)_1$ $(n \geq 5)$. Then, by the definition of the matrix $A_{\bm c}(t)_k$ $(2 \leq k \leq n-2)$ and  \cref{lem6.4}, we have
\begin{align} \label{eq6.4}
A_{\bm c}(t)_{n-2}
=
\scalebox{0.75}[0.75]
{$\left[
\begin{array}{cccccccc}
1 & 0 & \dots & \dots & \dots & 0 & 0 & 0 \\
0 &&&&& -(n-2)t & 0 & 0 \\
\vdots &&&&  \rotatebox[origin=c]{292}{\vdots} & \rotatebox[origin=c]{292}{\vdots} & 0 & 0 \\
\vdots &&& \rotatebox[origin=c]{292}{\vdots} & \hspace{3mm}  \rotatebox[origin=c]{292}{\vdots} & \rotatebox[origin=c]{292}{\vdots} & \vdots & \vdots \\ 
\vdots && \rotatebox[origin=c]{292}{\vdots} & \hspace{3mm}  \rotatebox[origin=c]{292}{\vdots} & \hspace{3mm}  \rotatebox[origin=c]{292}{\vdots} & \hspace{3mm}  \rotatebox[origin=c]{292}{\vdots} & \vdots & \vdots \\
0 & -(n-2)t & \rotatebox[origin=c]{292}{\vdots} & \rotatebox[origin=c]{292}{\vdots} & \hspace{3mm}  \rotatebox[origin=c]{292}{\vdots} && 0 & 0 \\ 
0 & 0 & 0 & \dots & \dots & 0 & a_{n-1,n-1}^{({\bm c})}(t)_{n-2} & a_{n-1,n}^{({\bm c})}(t)_{n-2} \\  
0 & 0 & 0 & \dots & \dots & 0 & a_{n,n-1}^{({\bm c})}(t)_{n-2} & a_{n,n}^{({\bm c})}(t)_{n-2}
\end{array}
\hspace{-0.5mm}\right],
$} 
\end{align}
\normalsize
where
\begin{align*}
\begin{cases}
a_{n-1,n-1}^{(\bm c)}(t)_{n-2}=2\left(1-2/n \right)t^2+\bar{x}_{n-1} \\[2mm]
a_{n-1,n}^{(\bm c)}(t)_{n-2}=a_{n,n-1}^{(\bm c)}(t)_{n-2}=\left(1-2/n\right)at^2-(nb\bar{x}_{n-2})/(n-2) \\[2mm]
a_{n,n}^{(\bm c)}(t)_{n-2}=\left\{\left(1-1/n\right)a^2-2b\right\}t^2+(n^2b^2\bar{x}_{n-3})/(n-2)^2.
\end{cases}
\end{align*}
Here, for any integer $m \geq 0$, we denote
\begin{align*}
&\bar{x}_{m}
=
\frac{(-1)^m\left(A^m-B^m\right)}{2^m\{-(n-2)\}^{m-2}\sqrt{(n-1)^2a^2-4n(n-2)b}}t, \\[2mm]
&A=-(n-1)a+\sqrt{(n-1)^2a^2-4n(n-2)b}, \\[2mm] 
&B=-(n-1)a-\sqrt{(n-1)^2a^2-4n(n-2)b}.
\end{align*}

\section{Proof of Theorems}
By a direct computation, we have
\begin{align} \label{eq6.5}
&\det
\left[
\begin{array}{cc}
a_{n-1,n-1}^{({\bm c})}(t)_{n-2} & a_{n-1,n}^{({\bm c})}(t)_{n-2} \\ \notag
a_{n,n-1}^{({\bm c})}(t)_{n-2} & a_{n,n}^{({\bm c})}(t)_{n-2}
\end{array}
\right] \\[1mm] \notag
&=\left(1-\frac{2}{n}\right)(a^2-4b)t^4 \\[1mm] \notag
&\hspace{2.5mm}+\frac{2(n-2)}{n}t^2\cdot\frac{n^2b^2\bar{x}_{n-3}}{(n-2)^2}+\left\{ \left( \frac{n-1}{n}a^2-2b \right) \right\}t^2\cdot\bar{x}_{n-1}+\frac{2(n-2)}{n}at^2\cdot\frac{nb\bar{x}_{n-2}}{n-2} \\[0mm] \notag
&\hspace{67.5mm}+\frac{n^2b^2}{(n-2)^2}\bar{x}_{n-1}\bar{x}_{n-3}-\frac{n^2b^2}{(n-2)^2}\bar{x}_{n-2}^2 \\[0mm]
&=\frac{(n-2)(a^2-4b)}{n}t^4+\left\{\frac{2nb^2\bar{x}_{n-3}}{n-2}+\frac{\left\{(n-1)a^2-2nb\right\}\bar{x}_{n-1}}{n}+2ab\bar{x}_{n-2}\right\}t^2 \\[0mm] \notag
&\hspace{76mm}+\frac{n^2b^2}{(n-2)^2}\left(\bar{x}_{n-1}\bar{x}_{n-3}-\bar{x}_{n-2}^2\right).
\end{align}
\normalsize
Here, let us put
\begin{align*}
&\alpha(t)=\left\{\frac{2nb^2\bar{x}_{n-3}}{n-2}+\frac{\left\{(n-1)a^2-2nb\right\}\bar{x}_{n-1}}{n}+2ab\bar{x}_{n-2}\right\}t^2, \\ 
&\beta(t)=\frac{n^2b^2}{(n-2)^2}\left(\bar{x}_{n-1}\bar{x}_{n-3}-\bar{x}_{n-2}^2\right).
\end{align*}
\begin{lem}\label{lem6.5}
Suppose $n\geq 5$ and put  
\begin{align*}
S_k=&(n-1)^3\binom{n-k-3}{k}a^4-\frac{n(n-1)\left\{ 5n^2-(6k+23)n+10k+24 \right\}}{n-k-3}\binom{n-k-3}{k}a^2b \\
&\hspace{80mm}+4n^2(n-2)\binom{n-k-4}{k}b^2. 
\end{align*}
\normalsize
Then, we have
\begin{align*}
&\alpha(t)=
\begin{cases}
\displaystyle \sum_{k=0}^{m_0}\frac{(-1)^{n+k}n^k(n-1)^{n-2k-4}(n-2)^ka^{n-2k-4}b^kS_k}{n(n-2)^{n-3}}t^3 & \hspace{-1mm} \text{$(a \neq 0$, or $a=0$, $n : $ even$)$}, \\
0 & \hspace{-1mm} \text{$(a=0$, $n : $ odd$)$}.
\end{cases} \\
&\beta(t)=-\frac{n^{n-1}b^{n-1}}{(n-2)^{n-3}}t^2.
\end{align*}
\end{lem}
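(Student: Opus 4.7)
\textbf{Proof plan for Lemma \ref{lem6.5}.}
The plan is to treat $\beta(t)$ and $\alpha(t)$ separately, starting from the Binet-type identity
\begin{equation*}
\frac{A^m - B^m}{A - B} \;=\; \sum_{k=0}^{\lfloor (m-1)/2 \rfloor} \binom{m-1-k}{k}(A+B)^{m-1-2k}(-AB)^k,
\end{equation*}
which, combined with $A+B = -2(n-1)a$ and $AB = 4n(n-2)b$, produces the explicit polynomial expansion
\begin{equation*}
\bar{x}_m \;=\; \frac{t}{(n-2)^{m-2}}\sum_{k=0}^{\lfloor (m-1)/2\rfloor}(-1)^{m-1-k}\binom{m-1-k}{k}(n-1)^{m-1-2k}n^k(n-2)^k\, a^{m-1-2k}\,b^k.
\end{equation*}
In particular, since $\phi_m := A^m - B^m$ obeys $\phi_m = (A+B)\phi_{m-1} - AB\,\phi_{m-2}$, the sequence $\bar{x}_m$ satisfies the three-term recurrence $(n-2)\bar{x}_m + (n-1)a\,\bar{x}_{m-1} + nb\,\bar{x}_{m-2} = 0$.

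For $\beta(t)$ the key ingredient is the telescoping identity
\begin{equation*}
(A^{n-1} - B^{n-1})(A^{n-3} - B^{n-3}) - (A^{n-2} - B^{n-2})^2 \;=\; -(AB)^{n-3}(A-B)^2,
\end{equation*}
verified by direct expansion: the $A^{2n-4}$ and $B^{2n-4}$ terms cancel, and the remaining mixed terms $-A^{n-1}B^{n-3} - A^{n-3}B^{n-1} + 2A^{n-2}B^{n-2}$ collapse to $-(AB)^{n-3}(A-B)^2$. Substituting $(A-B)^2 = 4D$ and $AB = 4n(n-2)b$ into $\bar{x}_{n-1}\bar{x}_{n-3} - \bar{x}_{n-2}^2$ and cancelling the common factor $2^{2n-4}(n-2)^{2n-8}D$ yields the claimed $\beta(t) = -n^{n-1}b^{n-1}t^2/(n-2)^{n-3}$.

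For $\alpha(t)$ I would first apply the recurrence with $m=n-1$ to eliminate $\bar{x}_{n-1}$, collapsing $\alpha(t)/t^2$ into the two-term form
\begin{equation*}
\frac{b\bigl[4nb-(n-1)a^2\bigr]}{n-2}\,\bar{x}_{n-3} \;-\; \frac{a\bigl[(n-1)^2a^2 - 2n(2n-3)b\bigr]}{n(n-2)}\,\bar{x}_{n-2}.
\end{equation*}
Inserting the Binet expansions of $\bar{x}_{n-2}$ and $\bar{x}_{n-3}$ and reading off the coefficient of $a^{n-2\ell}b^\ell$ in $n(n-2)^{n-3}\alpha(t)/t^3$ produces four contributions, coming from the monomials $(n-1)^2 a^3\bar{x}_{n-2}$, $2n(2n-3)ab\,\bar{x}_{n-2}$, $n(n-1)(n-2)a^2b\,\bar{x}_{n-3}$, and $4n^2(n-2)b^2\bar{x}_{n-3}$, each after an index shift. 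The $a^4$ and $b^2$ pieces then match $(n-1)^3\binom{n-\ell-3}{\ell}$ and $4n^2(n-2)\binom{n-\ell-4}{\ell}$ in the stated $S_\ell$ immediately.

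The main obstacle is matching the middle $a^2b$ coefficient, which reduces to the binomial identity
\begin{equation*}
2(2n-3)\binom{n-\ell-2}{\ell-1} + (n-2)\binom{n-\ell-3}{\ell-1} \;=\; \frac{5n^2 - (6\ell+17)n + 10\ell + 14}{n-\ell-2}\binom{n-\ell-2}{\ell-1}.
\end{equation*}
This follows from $\binom{n-\ell-3}{\ell-1} = \frac{n-2\ell-1}{n-\ell-2}\binom{n-\ell-2}{\ell-1}$ together with the polynomial identity $2(2n-3)(n-\ell-2) + (n-2)(n-2\ell-1) = 5n^2 - (6\ell+17)n + 10\ell + 14$; after re-indexing $k = \ell - 1$ this is exactly the numerator $5n^2-(6k+23)n+10k+24$ appearing in $S_k$. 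Finally, in the degenerate subcase $a=0$ with $n$ odd, the Binet expansion forces $\bar{x}_m = 0$ for every even $m$; hence $\bar{x}_{n-1} = \bar{x}_{n-3} = 0$ and the remaining term $2ab\,\bar{x}_{n-2}$ vanishes because $a=0$, giving $\alpha(t) = 0$ as claimed.
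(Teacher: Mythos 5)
Your proof plan is correct; I have checked the key identities. For $\beta(t)$ you do essentially what the paper does: the identity $(A^{n-1}-B^{n-1})(A^{n-3}-B^{n-3})-(A^{n-2}-B^{n-2})^2=-(AB)^{n-3}(A-B)^2$ is precisely the numerator manipulation in the paper's computation. For $\alpha(t)$, however, your route is genuinely different and noticeably leaner. The paper keeps all three of $\bar{x}_{n-1},\bar{x}_{n-2},\bar{x}_{n-3}$, rewrites the bracket $8n^2(n-2)b^2+\{(n-1)a^2-2nb\}A^2+4n(n-2)abA$ as $-2F\sqrt{H}+2GH$, sets $A^{n-3}=I+J\sqrt{H}$, and then must invoke the two Riordan identities of \cref{lem6.6} to collapse the double sums defining $I$ and $J$. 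You instead eliminate $\bar{x}_{n-1}$ via the three-term recurrence $(n-2)\bar{x}_m+(n-1)a\bar{x}_{m-1}+nb\bar{x}_{m-2}=0$ and expand only $\bar{x}_{n-2}$ and $\bar{x}_{n-3}$ by the Girard--Waring formula, which hands you the coefficients $\binom{m-1-j}{j}$ directly; matching the coefficient of $a^{n-2\ell}b^{\ell}$ then requires only the elementary two-term identity you state, whose verification is a one-line polynomial computation, and the $a^4$ and $b^2$ pieces match on the nose (I checked the signs and the index shifts; they are right). This buys you two things: \cref{lem6.6} becomes unnecessary, and because the Girard--Waring expansion is a polynomial identity in $A+B$ and $AB$, the degenerate case $(n-1)^2a^2-4n(n-2)b=0$, which the paper's closed form \eqref{eq6.3} must treat separately, never arises. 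Two cosmetic points for the final write-up: the symbol $D$ in your $\beta(t)$ computation should be defined as $H=(n-1)^2a^2-4n(n-2)b$, and that step is better phrased as a polynomial identity than as a cancellation, so that $H=0$ is covered; also note that the boundary values of $\ell$ in the coefficient extraction are absorbed by the paper's convention that $\binom{n}{m}=0$ for $m<0$ or $n<m$.
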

To carry out these computations, we need some combinatorial identities.
\begin{lem}{\cite[Chapter 2, Problem 18 (c), Chapter 6, Problem 18 (a)]{rio}}\label{lem6.6}
Let $N$ $(\geq 0)$ be an integer and put $\underline{m}=\lfloor N/2 \rfloor$, $\overline{m}=\lceil N/2 \rceil=\lfloor (N+1)/2 \rfloor$. Then, we have
\begin{align*}
\sum_{j=k}^{\underline{m}}\binom{N+1}{2j+1}\binom{j}{k}&=2^{N-2k}\binom{N-k}{k} \hspace{35mm} (k \in \Z, \ 0 \leq k \leq \underline{m}), \\[2mm] 
\sum_{j=k}^{\overline{m}}\binom{N+1}{2j}\binom{j}{k}&=2^{N-2k}\left[2\binom{N+1-k}{k}-\binom{N-k}{k} \right] \\[2mm]
&=2^{N-2k}\left[\binom{N+1-k}{k}+\binom{N-k}{k-1} \right] \hspace{5mm} (k \in \Z, \ 0 \leq k \leq \overline{m}). 
\end{align*}
\end{lem}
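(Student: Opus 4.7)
The plan is to prove both identities by the ``snake-oil'' technique: introduce a parameter $y$, swap the order of summation, and reduce each identity to a classical Lucas/Chebyshev-type polynomial identity in auxiliary variables $v,w$.

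For the first identity, I multiply both sides by $y^k$ and sum over $k\ge 0$. Using $\sum_{k\ge 0}\binom{j}{k}y^k = (1+y)^j$, the left-hand generating function becomes
\[
\sum_{j\ge 0}\binom{N+1}{2j+1}(1+y)^j.
\]
Setting $u^2=1+y$, this equals $\sum_j\binom{N+1}{2j+1}u^{2j} = \frac{(1+u)^{N+1}-(1-u)^{N+1}}{2u}$. I then parametrize $u = (v-w)/(v+w)$, so that $1+u = 2v/(v+w)$, $1-u = 2w/(v+w)$ and $y = u^2-1 = -4vw/(v+w)^2$. A direct computation yields
\[
\text{LHS} = \frac{2^N(v^{N+1}-w^{N+1})}{(v+w)^N(v-w)}, \qquad \text{RHS} = \sum_{k\ge 0}\frac{2^N\binom{N-k}{k}(-vw)^k}{(v+w)^{2k}}.
\]
Clearing the common factor $2^N/(v+w)^N$, the claim reduces to the classical identity
\[
\frac{v^{N+1}-w^{N+1}}{v-w} = \sum_{k\ge 0}\binom{N-k}{k}(-vw)^k(v+w)^{N-2k},
\]
which can be proved by induction on $N$: both sides satisfy the recurrence $P_{N+1} = (v+w)P_N - vw\,P_{N-1}$ (for the right-hand side this follows from Pascal's rule $\binom{N-k}{k} = \binom{N-1-k}{k}+\binom{N-1-k}{k-1}$), and match at $N=0,1$.

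For the second identity, the same substitution yields $\sum_j\binom{N+1}{2j}(1+y)^j = \frac{(1+u)^{N+1}+(1-u)^{N+1}}{2} = \frac{2^N(v^{N+1}+w^{N+1})}{(v+w)^{N+1}}$, and the claim reduces to the companion Lucas-sequence identity
\[
v^{N+1}+w^{N+1} = \sum_{k\ge 0}\tfrac{N+1}{N+1-k}\binom{N+1-k}{k}(-vw)^k(v+w)^{N+1-2k},
\]
again provable by induction using the same recurrence with initial values $L_0=2,L_1=v+w$. Using $\tfrac{k}{N+1-k}\binom{N+1-k}{k} = \binom{N-k}{k-1}$, the coefficient simplifies to $\binom{N+1-k}{k}+\binom{N-k}{k-1}$, which is the second form of the stated right-hand side. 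The equivalence of the two forms follows from Pascal's rule: $2\binom{N+1-k}{k} - \binom{N-k}{k} = \binom{N-k}{k}+2\binom{N-k}{k-1} = \binom{N+1-k}{k}+\binom{N-k}{k-1}$.

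The main obstacle is the bookkeeping during the $(v,w)$-reduction (tracking powers of $(v+w)$ and $2$) and proving the two classical polynomial identities cleanly; both can be handled by short inductions on $N$ using Pascal's rule and the second-order linear recurrence for $v^{N+1}\pm w^{N+1}$, so no deep ideas are needed beyond the generating-function reformulation.
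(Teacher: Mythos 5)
Your proof is correct, but it takes a genuinely different route from the paper's. The paper does not prove the first identity at all --- it is quoted from Riordan --- and obtains the second from the first by a short trick: since $\sum_{j=k}^{\underline{m}}\binom{N+1}{2j+1}\binom{j}{k}=\sum_{j=k}^{\overline{m}}\binom{N+1}{2j+1}\binom{j}{k}$ (the extra term vanishes by the convention $\binom{n}{n+m}=0$), adding the even-index and odd-index sums and applying Pascal's rule gives $\sum_{j=k}^{\overline{m}}\binom{N+2}{2j+1}\binom{j}{k}$, to which the first identity with $N$ replaced by $N+1$ applies; subtracting then yields $2^{N+1-2k}\binom{N+1-k}{k}-2^{N-2k}\binom{N-k}{k}$, which is exactly the first displayed form of the right-hand side. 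Your snake-oil argument instead proves both identities from scratch: the generating-function bookkeeping checks out ($1+u=2v/(v+w)$, $y=-4vw/(v+w)^2$, and $2^{-2k}4^k=1$ so the powers of $2$ and of $(v+w)$ match), and the two target statements are the classical Girard--Waring expansions of $(v^{N+1}-w^{N+1})/(v-w)$ and $v^{N+1}+w^{N+1}$ in $v+w$ and $vw$, which your inductions via $P_{N+1}=(v+w)P_N-vw\,P_{N-1}$ do establish; the coefficient manipulations $\tfrac{k}{N+1-k}\binom{N+1-k}{k}=\binom{N-k}{k-1}$ and $2\binom{N+1-k}{k}-\binom{N-k}{k}=\binom{N+1-k}{k}+\binom{N-k}{k-1}$ are also correct. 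What your approach buys is self-containedness --- the paper leans on the cited reference for the harder half --- at the cost of more computation; what the paper's approach buys is brevity, and it makes transparent that the second identity is simply the first identity at level $N+1$ minus the first identity at level $N$.
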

\begin{proof}
We omit the proof of the first identity and let us prove the second one. By using the convention
\begin{align*}
\binom{n}{n+m}=0 \hspace{3mm} (n \in \Z_{\geq 0}, \ m \in \Z_{\geq 1} ),
\end{align*}
we have
\begin{align*}
\sum_{j=k}^{\underline{m}}\binom{N+1}{2j+1}\binom{j}{k}=\sum_{j=k}^{\overline{m}}\binom{N+1}{2j+1}\binom{j}{k}
\end{align*}
and hence
\begin{align*}
\sum_{j=k}^{\overline{m}}\binom{N+1}{2j}\binom{j}{k}+\sum_{j=k}^{\underline{m}}\binom{N+1}{2j+1}\binom{j}{k}
&=\sum_{j=k}^{\overline{m}}\binom{N+1}{2j}\binom{j}{k}+\sum_{j=k}^{\overline{m}}\binom{N+1}{2j+1}\binom{j}{k} \\
&=\sum_{j=k}^{\overline{m}}\left\{ \binom{N+1}{2j}+\binom{N+1}{2j+1} \right\}\binom{j}{k} \\
&=\sum_{j=k}^{\overline{m}}\binom{N+2}{2j+1}\binom{j}{k}
\end{align*}
for any $k$ $(0 \leq k \leq \overline{m})$. Thus, by using the first identity, we get the second one.
\end{proof}

\begin{proof}[Proof of  \cref{lem6.5}]
We first prove the second equality. Then, by the definition of $\bar{x}_m$, we have
\begin{align*}
\beta(t)&=\frac{n^2b^2}{(n-2)^2}\cdot\frac{-A^{n-1}B^{n-3}-A^{n-3}B^{n-1}+2A^{n-2}B^{n-2}}{2^{2n-4}(n-2)^{2n-8}\left\{(n-1)^2a^2-4n(n-2)b\right\}}t^2 \\[2mm]
&=-\frac{n^2b^2\left\{ \left( 4n(n-2)b \right)^{n-3}\left(A^2+B^2 \right)-2\left( 4n(n-2)b \right)^{n-2} \right\}}{2^{2n-4}(n-2)^{2n-6}\left\{(n-1)^2a^2-4n(n-2)b\right\}}t^2 \\[2mm]
&=-\frac{2^{2n-6}n^{n-1}(n-2)^{n-3}b^{n-1}\cdot4\left\{(n-1)^2a^2-4n(n-2)b\right\}}{2^{2n-4}(n-2)^{2n-6}\left\{(n-1)^2a^2-4n(n-2)b\right\}}t^2 \\[2mm]
&=-\frac{n^{n-1}b^{n-1}}{(n-2)^{n-3}}t^2,
\end{align*}
which is the second equality. For the first equality, let us first suppose $a=0$. Then, we have  
\begin{align*}
\bar{x}_m&=\frac{(-1)^m\left\{\left\{-4n(n-2)b\right\}^{m/2}-(-1)^m\left\{-4n(n-2)b\right\}^{m/2}\right\}}{2^m\left\{-(n-2)\right\}^{m-2}\left\{-4n(n-2)b\right\}^{1/2}}t \\
&=
\begin{cases}
\displaystyle \frac{2\left\{-4n(n-2)b\right\}^{(m-1)/2}}{2^m(n-2)^{m-2}}t & \text{($m : $ odd)}, \\
0 & \text{($m : $ even)} 
\end{cases} \\
&=
\begin{cases}
\displaystyle \frac{(-nb)^{(m-1)/2}}{(n-2)^{(m-3)/2}}t & \text{($m : $ odd)}, \\
0 & \text{($m : $ even)} 
\end{cases}
\end{align*}
and hence we have
\begin{align*}
\alpha(t)&=\left\{ \frac{2nb^2\bar{x}_{n-3}}{n-2}-2b\bar{x}_{n-1} \right\}t^2 \\
&=
\begin{cases}
\displaystyle \left\{ \frac{2nb^2}{n-2}\cdot\frac{(-nb)^{(n-4)/2}}{(n-2)^{(n-6)/2}}t-\frac{2b(-nb)^{(n-2)/2}}{(n-2)^{(n-4)/2}}t \right\}t^2 & \text{($n : $ even)}, \\
0 & \text{($n : $ odd)}
\end{cases} \\
&=
\begin{cases}
\displaystyle \frac{(-1)^{n/2}4n^{(n-2)/2}b^{n/2}}{(n-2)^{(n-4)/2}}t^3 & \text{($n : $ even)}, \\
0 & \text{($n : $ odd)},
\end{cases}
\end{align*}
which is the claim of   \cref{lem6.5} for the case $a=0$ since for even $n$ $(n \geq 6)$, 
\begin{align*}
&\displaystyle \frac{\sum_{k=0}^{m_0}(-1)^{n+k}n^k(n-1)^{n-2k-4}(n-2)^ka^{n-2k-4}b^kS_k}{n(n-2)^{n-3}}t^3 \\
&=\frac{(-1)^{(3n-4)/2}n^{(n-4)/2}(n-2)^{(n-4)/2}b^{(n-4)/2}\cdot4n^2(n-2)b^2}{n(n-2)^{n-3}}t^3 \\
&=\frac{(-1)^{n/2}4n^{(n-2)/2}b^{n/2}}{(n-2)^{(n-4)/2}}t^3.
\end{align*}
Next, we suppose $a\neq0$ and let us put 
\begin{align*}
&F=(n-1)^2a^3-2n(2n-3)ab, \ G=(n-1)a^2-2nb, \\ 
&H=(n-1)^2a^2-4n(n-2)b.
\end{align*}
Then, we have  
\normalsize
\begin{align*}
&\frac{2nb^2\bar{x}_{n-3}}{n-2}+\frac{\left\{(n-1)a^2-2nb\right\}\bar{x}_{n-1}}{n}+2ab\bar{x}_{n-2} \\[1mm] 
&=\frac{2nb^2}{n-2}\frac{\left(A^{n-3}-B^{n-3}\right)}{2^{n-3}(n-2)^{n-5}\sqrt{H}}t+\frac{(n-1)a^2-2nb}{n}\frac{\left(A^{n-1}-B^{n-1}\right)}{2^{n-1}(n-2)^{n-3}\sqrt{H}}t \\[2mm]
&\hspace{90mm}+\frac{2ab(A^{n-2}-B^{n-2})}{2^{n-2}(n-2)^{n-4}\sqrt{H}}t \\[2mm] 
&=\frac{8n^2(n-2)b^2(A^{n-3}-B^{n-3})}{2^{n-1}n(n-2)^{n-3}\sqrt{H}}t+\frac{\left\{ (n-1)a^2-2nb \right\}(A^{n-1}-B^{n-1})}{2^{n-1}n(n-2)^{n-3}\sqrt{H}}t \\[1mm] 
&\hspace{78mm}+\frac{4n(n-2)ab(A^{n-2}-B^{n-2})}{2^{n-1}n(n-2)^{n-3}\sqrt{H}}t \\[1mm] 
&=\frac{8n^2(n-2)b^2+\left\{ (n-1)a^2-2nb \right\}A^2+4n(n-2)abA}{2^{n-1}n(n-2)^{n-3}\sqrt{H}}A^{n-3}t \\[1mm] 
&\hspace{28mm}-\frac{8n^2(n-2)b^2+\left\{ (n-1)a^2-2nb \right\}B^2+4n(n-2)abB}{2^{n-1}n(n-2)^{n-3}\sqrt{H}}B^{n-3}t
\end{align*}
\normalsize
and
\normalsize
\begin{align*}
&8n^2(n-2)b^2+\left\{ (n-1)a^2-2nb \right\}A^2+4n(n-2)abA \\
&=8n^2(n-2)b^2+\left\{ (n-1)a^2-2nb \right\}\left\{ 2(n-1)^2a^2-4n(n-2)b-2(n-1)a\sqrt{H} \right\} \\
&\hspace{70mm}+4n(n-2)ab\left\{ -(n-1)a+\sqrt{H} \right\} \\
&=-2\left\{ (n-1)^2a^3-2n(2n-3)ab \right\}\sqrt{H}+2\left\{ (n-1)a^2-2nb \right\}H \\
&=-2F\sqrt{H}+2GH, \\[3mm]
&8n^2(n-2)b^2+\left\{ (n-1)a^2-2nb \right\}B^2+4n(n-2)abB \\
&=8n^2(n-2)b^2+\left\{ (n-1)a^2-2nb \right\}\left\{ 2(n-1)^2a^2-4n(n-2)b+2(n-1)a\sqrt{H} \right\} \\
&\hspace{70mm}+4n(n-2)ab\left\{ -(n-1)a-\sqrt{H} \right\} \\
&=2\left\{ (n-1)^2a^3-2n(2n-3)ab \right\}\sqrt{H}+2\left\{ (n-1)a^2-2nb \right\}H, \\
&=2F\sqrt{H}+2GH.
\end{align*}
\normalsize
Hence, by putting
\begin{align*}
A^{n-3}=\left\{-(n-1)a+\sqrt{(n-1)^2a^2-4n(n-2)b} \right\}^{n-3}=I+J\sqrt{H}, 
\end{align*}
we have
\begin{align*}
&\frac{2nb^2\bar{x}_{n-3}}{n-2}+\frac{\left\{(n-1)a^2-2nb\right\}\bar{x}_{n-1}}{n}+2ab\bar{x}_{n-2} \\[1mm] 
&=\frac{\left\{-2F\sqrt{H}+2GH\right\}\left( I+J\sqrt{H} \right)-\left\{2F\sqrt{H}+2GH\right\}\left( I-J\sqrt{H} \right)}{2^{n-1}n(n-2)^{n-3}\sqrt{H}}t \\
&=\frac{-FI+GHJ}{2^{n-3}n(n-2)^{n-3}}t.
\end{align*}
Then, by using   \cref{lem6.6}, we have
\begin{align*}
I&=\sum_{\ell=0}^{m_0}\binom{n-3}{2\ell}\left\{ -(n-1)a \right\}^{n-2\ell-3}\left\{ (n-1)^2a^2-4n(n-2)b \right\}^{\ell} \\
&=\sum_{\ell=0}^{m_0}\binom{n-3}{2\ell}\left\{ -(n-1)a \right\}^{n-2\ell-3}\left\{\sum_{k=0}^{\ell}\binom{\ell}{k}\left\{ (n-1)^2a^2 \right\}^{\ell-k}\left\{ -4n(n-2)b \right\}^{k}\right\} \\
&=\sum_{k=0}^{m_0}\sum_{\ell=k}^{m_0}\binom{n-3}{2\ell}\binom{\ell}{k}(-1)^{n+k+1}2^{2k}n^k(n-1)^{n-2k-3}(n-2)^ka^{n-2k-3}b^k \\
&=\sum_{k=0}^{m_0}2^{n-2k-4}\left\{\binom{n-k-3}{k}+\binom{n-k-4}{k-1} \right\} \\
&\hspace{43mm}\times (-1)^{n+k+1}2^{2k}n^k(n-1)^{n-2k-3}(n-2)^ka^{n-2k-3}b^k \\
&=\sum_{k=0}^{m_0} (-1)^{n+k+1}2^{n-4}n^k(n-1)^{n-2k-3}(n-2)^k\frac{n-3}{n-k-3}\binom{n-k-3}{k}a^{n-2k-3}b^k 
\end{align*}
and
\begin{align*}
J&=\sum_{\ell=0}^{m_0}\binom{n-3}{2\ell+1}\left\{ -(n-1)a \right\}^{n-2\ell-4}\left\{ (n-1)^2a^2-4n(n-2)b \right\}^{\ell} \\
&=\sum_{\ell=0}^{m_0}\binom{n-3}{2\ell+1}\left\{ -(n-1)a \right\}^{n-2\ell-4}\left\{\sum_{k=0}^{\ell}\binom{\ell}{k}\{ (n-1)^2a^2 \}^{\ell-k}\left\{ -4n(n-2)b \right\}^k\right\} \\
&=\sum_{k=0}^{m_0}\sum_{\ell=k}^{m_0}\binom{n-3}{2\ell+1}\binom{\ell}{k}(-1)^{n+k}2^{2k}n^k(n-1)^{n-2k-4}(n-2)^ka^{n-2k-4}b^k \\
&=\sum_{k=0}^{m_0}2^{n-2k-4}\binom{n-k-4}{k}(-1)^{n+k}2^{2k}n^k(n-1)^{n-2k-4}(n-2)^ka^{n-2k-4}b^k \\
&=\sum_{k=0}^{m_0}(-1)^{n+k}2^{n-4}n^k(n-1)^{n-2k-4}(n-2)^k\binom{n-k-4}{k}a^{n-2k-4}b^k,
\end{align*}
which implies, 
\begin{align*}
&-FI+GHJ \\
&=-\{(n-1)^2a^3-2n(2n-3)ab\}I+\{ (n-1)^3a^4-2n(n-1)(3n-5)a^2b+8n^2(n-2)b^2 \}J \\
&=\sum_{k=0}^{m_0}(-1)^{n+k}2^{n-4}n^k(n-1)^{n-2k-4}(n-2)^ka^{n-2k-4}b^k \\
&\hspace{5mm}\times \biggr\{ (n-1)^3\frac{n-3}{n-k-3}\binom{n-k-3}{k}a^4-2n(n-1)(2n-3)\frac{n-3}{n-k-3}\binom{n-k-3}{k}a^2b \\
&\hspace{8mm}+(n-1)^3\binom{n-k-4}{k}a^4-2n(n-1)(3n-5)\binom{n-k-4}{k}a^2b+8n^2(n-2)\binom{n-k-4}{k}b^2 \biggl\} \\
&=\sum_{k=0}^{m_0}(-1)^{n+k}2^{n-3}n^k(n-1)^{n-2k-4}(n-2)^ka^{n-2k-4}b^k \\
&\hspace{5mm}\times \biggr\{ (n-1)^3\binom{n-k-3}{k}a^4-\frac{n(n-1)\left\{ 5n^2-(6k+23)n+10k+24 \right\}}{n-k-3}\binom{n-k-3}{k}a^2b \\
&\hspace{88mm}+4n^2(n-2)\binom{n-k-4}{k}b^2 \biggl\}
\end{align*}
\normalsize
Therefore, we finally obtain
\begin{align*}
\alpha(t)&=\left\{\frac{2nb^2\bar{x}_{n-3}}{n-2}+\frac{\left\{(n-1)a^2-2nb\right\}\bar{x}_{n-1}}{n}+2ab\bar{x}_{n-2}\right\}t^2 =\frac{-FI+GHJ}{2^{n-3}n(n-2)^{n-3}}t^3 \\
&=\sum_{k=0}^{m_0}\frac{(-1)^{n+k}n^k(n-1)^{n-2k-4}(n-2)^ka^{n-2k-4}b^kS_k}{n(n-2)^{n-3}}t^3,
\end{align*}
\normalsize
where
\begin{align*}
S_k&=(n-1)^3\binom{n-k-3}{k}a^4-\frac{n(n-1)\left\{ 5n^2-(6k+23)n+10k+24 \right\}}{n-k-3}\binom{n-k-3}{k}a^2b \\
&\hspace{81mm}+4n^2(n-2)\binom{n-k-4}{k}b^2,
\end{align*}
\normalsize
which completes the proof of \cref{lem6.5}.
\end{proof}

\begin{proof}[Proof of  \cref{thm6.0}]
Theorem 3 has been proved for $n=3, 4$ and hence let us assume $n \geq 5$. Then, by the definition of $A_{\bm c}(t)_{n-2}$ and equations \eqref{eq6.4}, \eqref{eq6.5}, we have
\begin{align*}
&\Delta\left(f_{\bm c}(t; x)\right)=\det A_{\bm c}(t)=n\cdot\det A_{\bm c}(t)_{n-2} \\
&=n\cdot(-1)^{\lceil (n-3)/2 \rceil}(n-2)^{n-3}t^{n-3}\cdot
\det
\left[
\begin{array}{cc}
a_{n-1,n-1}^{({\bm c})}(t)_{n-2} & a_{n-1,n}^{({\bm c})}(t)_{n-2} \\ \notag
a_{n,n-1}^{({\bm c})}(t)_{n-2} & a_{n,n}^{({\bm c})}(t)_{n-2}
\end{array}
\right].
\end{align*}
Therefore, by  \cref{lem6.5}, we have
\begin{align*}
\Delta\left(f_{\bm c}(t; x)\right)=(-1)^{m_1}t^{n-1}\Biggl\{(n-2)^{n-2}(a^2-4b)t^2+\gamma_{\bm c}t-n^nb^{n-1} \Biggr\}, 
\end{align*}
where
\begin{align*}
\gamma_{\bm c}=
\begin{cases}
\displaystyle \sum_{k=0}^{m_0}(-1)^{n+k}n^k(n-1)^{n-2k-4}(n-2)^ka^{n-2k-4}b^kS_k & \hspace{-1mm} \text{$(a \neq 0$,  or $a=0$, $n : $ even$)$}, \\
0 & \hspace{-1mm} \text{$(a=0$, $n : $ odd$)$},
\end{cases}
\end{align*}
\normalsize
which completes the proof.
\end{proof}

Now we are ready to prove \cref{thm6.1}.

\begin{proof}[Proof of  \cref{thm6.1}]
Put
$Q(t)=(n-2)^{n-2}(a^2-4b)t^2+\gamma_{\bm c}t-n^nb^{n-1}$
and suppose $b=(n-1)^2a^2/4n(n-2)$. 
Then, by equation \eqref{eq6.5} and  \cref{lem6.5}, we have
\begin{align*}
&Q(t) \\
&=n(n-2)^{n-3}\Biggl\{\frac{(n-2)(a^2-4b)}{n}t^2+\left\{\frac{2nb^2\bar{x}_{n-3}}{n-2}+\frac{\left\{(n-1)a^2-2nb\right\}\bar{x}_{n-1}}{n}+2ab\bar{x}_{n-2}\right\} \\[1mm] 
&\hspace{105mm}-\frac{n^{n-1}b^{n-1}}{(n-2)^{n-3}} \Biggr\} \\
&=(n-2)^{n-2}\left\{a^2-\frac{(n-1)^2a^2}{n(n-2)}\right\}t^2 \\
&\hspace{18mm}+n(n-2)^{n-3}\left\{\frac{2nb^2\bar{x}_{n-3}}{n-2}+\frac{\left\{(n-1)a^2-2nb\right\}\bar{x}_{n-1}}{n}+2ab\bar{x}_{n-2}\right\}-n^nb^{n-1} \\
&=-\frac{(n-2)^{n-3}a^2}{n}t^2 \\
&\hspace{18mm}+n(n-2)^{n-3}\left\{\frac{2nb^2\bar{x}_{n-3}}{n-2}+\frac{\left\{(n-1)a^2-2nb\right\}\bar{x}_{n-1}}{n}+2ab\bar{x}_{n-2}\right\}-n^nb^{n-1}.
\end{align*}
\normalsize
Here, by equation \eqref{eq6.3}, we have
\begin{align*}
&n(n-2)^{n-3}\left\{\frac{2nb^2\bar{x}_{n-3}}{n-2}+\frac{\left\{(n-1)a^2-2nb\right\}\bar{x}_{n-1}}{n}+2ab\bar{x}_{n-2}\right\} \\
&=n(n-2)^{n-3}\Biggl\{\frac{2n}{n-2}\cdot\frac{(n-1)^4a^4}{16n^2(n-2)^2}\cdot\frac{(-1)^{n-3}(n-3)\{-(n-1)a\}^{n-4}}{2^{n-4}\left\{ -(n-2) \right\}^{n-5}}t \\
&\hspace{30mm}+\left\{\frac{(n-1)a^2}{n}-\frac{(n-1)^2a^2}{2n(n-2)}\right\}\cdot\frac{(-1)^{n-1}(n-1)\{-(n-1)a\}^{n-2}}{2^{n-2}\left\{ -(n-2) \right\}^{n-3}}t \\
&\hspace{59mm}+\frac{(n-1)^2a^3}{2n(n-2)}\cdot\frac{(-1)^{n-2}(n-2)\{-(n-1)a\}^{n-3}}{2^{n-3}\left\{ -(n-2) \right\}^{n-4}}t\Biggr\} \\
&=n(n-2)^{n-3}\left\{\frac{(-1)^n(n-1)^n(n-3)a^n}{2^{n-1}n(n-2)^{n-2}}t+\frac{(-1)^n(n-1)^n(n-3)a^n}{2^{n-1}n(n-2)^{n-2}}t-\frac{(-1)^n(n-1)^{n-1}a^n}{2^{n-2}n(n-2)^{n-4}}t\right\} \\
&=-\frac{(-1)^n(n-1)^{n-1}a^n}{2^{n-2}(n-2)}t,
\end{align*}
\normalsize
which implies
\begin{align*}
Q(t)&=-\frac{(n-2)^{n-3}a^2}{n}t^2-\frac{(-1)^n(n-1)^{n-1}a^n}{2^{n-2}(n-2)}t-n^nb^{n-1}, \\
\Delta\left(Q(t)\right)&=\left\{ -\frac{(-1)^n(n-1)^{n-1}a^n}{2^{n-2}(n-2)} \right\}^2-4\cdot\left\{ -\frac{(n-2)^{n-3}a^2}{n} \right\}\cdot\left( -n^nb^{n-1}\right) \\
&=\frac{(n-1)^{2n-2}a^{2n}}{2^{2n-4}(n-2)^2}-\frac{4(n-2)^{n-3}a^2}{n}\cdot n^{n} \cdot\left\{\frac{(n-1)^2a^2}{4n(n-2)}\right\}^{n-1}=0
\end{align*}
and hence, by completing the square, we have
\begin{align*}
Q(t)=-\frac{(n-2)^{n-3}a^2}{n}\left\{t+\frac{(-1)^nn(n-1)^{n-1}a^{n-2}}{2^{n-1}(n-2)^{n-2}} \right\}^2.
\end{align*}
Thus, if $n$ is even, we have $\alpha_{\bm c}=0$ and hence by \cref{thm5.2}, we have $N_{f_{\bm c}(t; x)}=0$ for any positive real number $t$ since $x^2+ax+b$ has no real root in this case. Then, by considering the graph of the function $y=x^n+t(x^2+ax+b)$ $(t>0)$, we can conclude that $N_{f_{\bm c}(t; x)}=0$ whenever $(n-1)^2a^2/4n(n-2)\leq b$. 
\end{proof}


\begin{rem}
It was pointed out to us by J. Gutierrez that the proof of the Cor.~1, it is elementary and all the machinery of \cref{thm6.0} is not needed. However, we decided to present it here with the intention that perhaps such techniques can be generalized to polynomials $f(x)=x^n + t\cdot g(x)$, $\deg g \geq 3$. 
\end{rem}

\nocite{*}

\bibliographystyle{amsalpha} 

\bibliography{ref}{}

\end{document}